\documentclass[11pt,reqno]{amsart}

\usepackage{booktabs}                                 
\usepackage[english]{babel}
\usepackage{latexsym}
 \usepackage[utf8]{inputenc}
\usepackage{babel}
\usepackage{fancyhdr}
\usepackage{longtable}
\usepackage{mathrsfs}
\usepackage{geometry}
\usepackage{comment}
\usepackage{textcomp}
\usepackage{caption}
\usepackage{lmodern}
\usepackage{mathrsfs}
 \usepackage[T1]{fontenc}
\usepackage[all,cmtip]{xy}
\usepackage{epsfig}
 \usepackage{amsmath,amsfonts,amssymb,amsthm}
\usepackage{graphics}
\usepackage{graphicx}
\usepackage{verbatim}
\usepackage{dsfont}
\usepackage{enumerate}  
\usepackage{pdflscape}
\usepackage{longtable}
\usepackage{booktabs}
\usepackage{colortbl}
\usepackage[shortlabels]{enumitem}
\usepackage[breaklinks]{hyperref} 
\hypersetup{
	colorlinks,
	linkcolor={red},
	citecolor={blue},
	urlcolor={red}
}
\usepackage{stmaryrd}
\usepackage{multirow}

\usepackage[disable]{todonotes}

\newcommand{\C}{\mathbb{C}}

\newcommand{\QQ}{\mathbb{Q}}
\newcommand{\Q}{\mathbb{Q}}
\newcommand{\Z}{\mathbb{Z}}

\newcommand{\mQ}{\mathcal{Q}}

\newcommand{\mF}{\mathcal{F}}
\newcommand{\mG}{\mathcal{G}}

\newcommand{\PP}{\mathbb{P}}

\newcommand{\mU}{\mathcal{U}}

\newcommand{\of}{\mathcal{O}}

\newcommand{\W}{\bigwedge}

\DeclareMathOperator{\Ext}{Ext}

\DeclareMathOperator{\Sym}{Sym}

\DeclareMathOperator{\Gr}{Gr}
\DeclareMathOperator{\Fl}{Fl}
\DeclareMathOperator{\OGr}{OGr}
\DeclareMathOperator{\SGr}{SGr}

\DeclareMathOperator{\Bl}{Bl}

\setcounter{MaxMatrixCols}{200}  

\newtheorem{thm}{Theorem}[section]
\newtheorem{corollary}[thm]{Corollary}

\newtheorem{proposition}[thm]{Proposition}

\newtheorem{conj}[thm]{Conjecture}

\newtheorem*{aim*}{Aim of this paper}

\newtheorem{method}[thm]{Method}

\theoremstyle{definition}
\newtheorem{definition}[thm]{Definition}

\setlength{\topmargin}{-1.2cm}
\setlength{\textheight}{23.4cm}
\setlength{\textwidth}{17cm}
\setlength{\oddsidemargin}{-0.2cm}
\setlength{\evensidemargin}{-0.2cm}
\makeatletter    
\def\l@subsection{\@tocline{1}{0,2pt}{2pc}{8mm}{\ \ }} 
\makeatletter    
\def\l@section{\@tocline{1}{0,2pt}{2pc}{8mm}{\ \ }}

\author{Enrico Fatighenti}
\address{Dipartimento di Matematica "Guido Castelnuovo"\\
Sapienza Universit\`a di Roma\\
 Piazzale Aldo Moro 5, 00185 Roma, Italy}
\email[E.~Fatighenti]{fatighenti@mat.uniroma1.it}

 
\title[Topics on Fano varieties of K3 type]{Topics on Fano varieties of K3 type}

\begin{document}
\begin{abstract}
This is a survey paper about a selection of recent results on the geometry of a special class of Fano varieties, which are called of K3 type. The focus is mostly Hodge-theoretical, with an eye towards the multiple connections between Fano varieties of K3 type and hyperk\"ahler geometry.
\end{abstract}
\maketitle

\thispagestyle{empty}

\section{Introduction}

Fano varieties of K3 type (FK3 for short), are a very special class of smooth projective varieties that lies at the crossroad of many different topics in algebraic geometry. In short, their peculiar name comes from the fact that they contain a Hodge-theoretical ``core'' that looks like the one of a K3 surface.
The archetypal example of a FK3 is the famous \emph{cubic fourfold}, i.e. the zero locus of a cubic polynomial in a five-dimensional projective space. Many interesting questions can be asked (and sometimes answered) about this variety, including, but not limiting to, the problem of its rationality and  the explicit construction of hyperk\"ahler manifolds linked to it. Even the techniques used vary wildly, from very classical Hodge-theoretical tools to their derived categorical counterpart, the study of stability conditions and so on.

Even more interestingly, very similar constructions can be performed starting from other known and lesser-known FK3. We hint in particular at the constructions of explicit examples of hyperk\"ahler manifolds (or IHS, i.e. Irreducible Holomorphic Symplectic), which are one of the most interesting yet elusive objects in algebraic geometry. Our not-so-hidden hope is in fact that the (easier) hunt for FK3 varieties will eventually lead us to new locally complete families of hyperk\"ahler manifolds still undiscovered.

This brief exposition will however only deal with the specific sub-topic of how to construct new examples of FK3 varieties and how to understand their Hodge structures. In particular, it will be a recollection of different results and constructions of the author and his collaborators, which have been obtained in a series of (quite long and verbose) papers in the last few years. Therefore, we will only briefly discuss the already known and world-famous examples, together with some results on their geometry. To be precise, in Section 2 we will touch upon some general facts on the classification of Fano varieties, the definition of FK3, and the extension to the categorical setting. We will briefly explain some recent and less recent techniques to attach families of hyperk\"ahler to families of FK3, the link with rationality questions and other topics, including the Hodge conjecture for FK3. We hope that the reader won't be disappointed by our synthesis of these topics, but anything more than this would fall way beyond the scope of this modest survey and would almost require a book on its own.

After this, in Section 3 we will jump straight into the action, discussing all the known examples of dimension four, including 64 new cases just recently discovered, and how they relate to the known examples from a birational and Hodge-theoretical viewpoint. We will then explore in Section 4 the higher-dimensional landscape, describing some Hodge-theoretical criteria to systematically produce FK3 varieties and some special interesting examples with a rich geometry. Finally, we will discuss a special set of correspondences (\emph{jumps and projections}) that will allow us to identify all those seemingly unrelated Hodge structures with one another.

We will finish this survey on a positive note, explaining what the future directions of our never-ending quest will be, and what the reader can expect us to find in the (hopefully!) not-so-distant future.

\subsection*{Notation} We work over the field of complex numbers. Every variety is assumed to be smooth and projective, unless otherwise specified. We will often work with varieties $X \subset G$ that can be described as zero loci $X=Z(\sigma)$, for $\sigma$ a general global section $\sigma \in H^0(G, \mF)$, with $\mF$ a (globally generated) vector bundle. We will write $X=(G, \mF)$ to indicate this. 
We will denote with $\Gr(k,n)$ the Grassmannian of $k$-dimensional subspaces in a $n$ dimensional complex vector space. It comes with two tautological vector bundles, i.e. $\mU$ of rank $k$ and $\mQ$ of rank $n-k$. Following our choice of convention, $\det (\mU^{\vee} )=\det(\mQ) \cong \of(1)$.

Similarly, if $E$ is a rank $r$ vector bundle over a variety $X$, we denote by $\PP_X(E)$ (or simply by $\PP(E)$) the projective bundle $\pi:\mathrm{Proj}(\Sym E^{\vee}) \rightarrow X$, i.e. we adopt the subspace notation. If we denote by $\of_{\PP(E)}(1)$ (or simply $\of(1)$) the relatively ample line bundle, this yields $H^0(\PP(E),\of_{\PP(E)}(1)) \cong H^0(X, E^{\vee})$. 

For products of varieties $X_1 \times X_2$, the expression $\mF_1 \boxtimes \mF_2$ will denote the tensor product between the pullbacks of $\mF_i$ via the natural projections.

\subsection*{Acknowledgments}
This survey paper has been written for a special edition of the Proceedings of Nottingham Algebraic Geometry Seminar, meant to celebrate one year of this weekly online meeting. I wish to thank the organizers: Al Kasprzyk, Livia Campo, and Johannes Hofscheier for their great effort in maintaining this throughout the whole pandemic crisis.

This survey is a summary of many papers written in the past few years: therefore I would like to thank first and foremost my collaborators on this project: Marcello Bernardara, Laurent Manivel, Giovanni Mongardi and Fabio Tanturri.

Throughout these years I had the pleasure of discussing these topics with many people, each of them providing me with their unique and precious point of view. Many of them gave also incredibly valuable feedback and comments on this set of notes. I would like to thank in particular Hamid Abban, Pieter Belmans, Vladimiro Benedetti, Valeria Bertini, Michele Bolognesi, Daniele Faenzi, Camilla Felisetti, Lie Fu, Atanas Iliev, Grzegorz and Micha{\l} Kapustka, Sasha Kuznetsov, Emanuele Macr\`i, Kieran O'Grady, Claudio Onorati, Miles Reid, Eleonora Romano and everyone else that I am guilty forgetting. I also thank the referee for their useful comments and remarks.

In these years I have been supported by various grants and projects, the last one being PRIN2017 "2017YRA3LK". The author is member of INDAM-GNSAGA.

\section{Generalities on Fano varieties and FK3}

\subsection{Fano varieties and their classification}
A Fano variety is a smooth projective variety $X$ such that its anticanonical bundle $-K_X$ is ample. One important property of smooth Fano varieties is their \emph{boundedness}, cf. \cite{kmm}: this in particular implies that for every dimension there exists a finite number of families of Fano varieties up to deformation. Of course, knowing that a classification is possible \emph{in theory} and actually having a classification are two completely separate matters. In fact, the full classification is known only in dimension up to three, and only (very) partial results are known in dimension four and beyond. We briefly recall that there are:
\begin{enumerate}
\item 1 Fano in dimension one;
\item 10 families of Fano in dimension two (Del Pezzo\footnote{There is a \emph{great debate} on whether the first letter of the surname should be capitalized or not. As Miles Reid correctly remarks in \cite{milesdp} the original spelling should be \emph{Pasquale del Pezzo}, since the preposition \emph{del} in a \emph{noble surname} needs not to be capitalized when preceded by the given name. However, as remarked in \cite{ciliberto}, there are different schools of thought on orthography when the surname is not preceded by the given name. Moreover, noble titles are not recognized anymore in Italy since 1948. Hence, in a republican spirit, we will write the first letter of the surname in upper-case.} surfaces);
\item 105 families of Fano in dimension three.
\end{enumerate}
The one-dimensional Fano is of course $\PP^1$. Del Pezzo surfaces are one of the most studied objects in algebraic geometry: from a birational viewpoint one has $\PP^1 \times \PP^1$, or $\PP^2$ blown up in (sufficiently general configurations of) up to 8 points. In particular, the anticanonical degree $-K_X^2$ of each Del Pezzo is 8 for $\PP^1 \times \PP^1$, and $9-k$ for $\Bl_{k}\PP^2$. Every family of Del Pezzo surfaces admits well-known biregular descriptions as well, i.e. by the means of equations (or more generally as $(G, \mF)$). To give an easy example, every Del Pezzo surface of degree 3, i.e. the blow up of $\PP^2$ in 6 points, can be realized as a smooth cubic surface $S=(\PP^3, \of(3))$, and vice versa. 

The situation is more complex in dimension three. Out of the 105 families, there are 17 of them which are prime, i.e. of Picard rank 1. They were originally studied by Fano himself and then later by Iskovskikh, \cite{isk}. Their biregular classification was later completed by Mukai \cite{mukai}. As for the remaining 88, the original classification completed by Mori and Mukai in \cite{morimukai} relied heavily on the birational tool of Mori theory. A biregular classification can be found in \cite{ccgk}, where every Fano 3-fold was described as either complete intersection in toric varieties or as zero loci in homogeneous varieties, and in \cite{dft}, where the entire classification was rewritten by considering only products of possibly weighted Grassmannians as ambient space. We refer to \cite{fanography} for an excellent overview.

From dimension four and onwards, only partial results are known. A good strategy to obtain some (weak) classification-type results is to stratify the set of families of Fano varieties by their \emph{index}, that is, the maximal integer $\iota_X$ such that $K_X$ is divisible by $\iota_X$ in Pic($X$). A basic fact is that, if $X$ is a smooth Fano of dimension $n$, the index $\iota_X$ is bounded by $n+1$. In general, fixing the dimension to $n$, Fano varieties are classified for the following values of the index and Picard rank $\rho_X$, and the lists can be found in \cite{ag5} and \cite{wis}:
\begin{enumerate}
\item $\iota_X=n+1$. $X \cong \PP^n$;
\item $\iota_X=n$. $X \cong \QQ^n$, i.e. a $n$-dimensional quadric hypersurface;
\item $\iota_X=n-1$. $X$ is called a \emph{Del  Pezzo manifold};
\item $\iota_X=n-2$ and $\rho_X=1$.  $X$ is called a \emph{Mukai manifold};
\item $\iota_X \geq \frac{n}{2}+1$ and $\rho_X >1$. $X \cong \PP^{\iota_X-1} \times \PP^{\iota_X-1}$ when the equality holds;
\item $\iota_X = \frac{n+1}{2}$ with $n$ odd and $\rho_X >1$. $X$ is called a \emph{Wi\'sniewski} manifold.
\end{enumerate}
We are interested in a special class of Fano varieties, which we call \emph{Fano varieties of K3 type} (FK3 for short). By construction these will have dimension at least 4, and index often just below the Wi\'sniewski threshold. We will now start with their definition and a first description of their properties.
\subsection{Fano varieties of K3 type}
We now define the main object of this survey, that is \emph{Fano varieties of K3 type} (FK3). We start with a series of Hodge-theoretical definitions.
\begin{definition}\cite[\S1]{r72} Let $H \cong \bigoplus_{p+q=k} H^{p,q}$ be a Hodge structure of weight $k$. We define the \emph{level} $\lambda(H)$ as \[ \lambda(H) := \textrm{max} \lbrace q-p \ | \ H^{p,q} \neq 0 \rbrace.\]
\end{definition}
The level of a Hodge structure measures its complexity. It is conventionally set to $-\infty$ if $H^{p,q}=0$ for all $p,q$ and it is equal to 0 if and only $H$ is central (or of \emph{Hodge-Tate type}) i.e. $H^{p,q} \neq 0$ only for $p=q$. We now introduce the key notion of K3-type Hodge structures.
\begin{definition} Let $H$ be a Hodge structure of weight $k$. We define $H$ to be of \emph{K3 type} if the following holds:
\begin{enumerate}
\item $\lambda(H)=2$;
\item $h^{\frac{k-2}{2}, \frac{k+2}{2}}=1$.
\end{enumerate}
\end{definition}
The above definition can be easily adapted to the case of $m$-Calabi-Yau Hodge structure, where the Hodge structure is required to be of level $m$ and with $h^{\frac{k-m}{2}, \frac{k+m}{2}}=1$. 

We point out that many (very different) varieties have Hodge structures of K3 type with this definition, including as well varieties of general type as the one constructed in \cite[Corollary 2.11]{surface}. We therefore need to restrict to our specific case of interest, as follows.

\begin{definition}\label{definition:fk3} Let $X$ be a smooth Fano variety. We say that $X$ is a Fano of \emph{K3 type} (FK3) if $H^*(X, \C)$ contains at least one sub-Hodge structure of K3 type, and in every $k$ there is at most one K3 structure.
\end{definition}
The above definition needs to be adapted if one considers Fano varieties of $m$-CY type. In fact, it is possible to find Fano varieties whose Hodge structure contains sub-structures of K3 and CY type at the same time, see e.g. \cite[Ex. S7]{eg2}. In those cases one should speak of Fano varieties of (pure) $m$-CY type or of mixed $(m_1, \ldots, m_s)$-CY type. However, for the purpose of this survey, we will only focus on the former, without adding this extra piece of notation.

Even though we will not focus on the cases $m\neq2$, they are equally interesting. For example, a first list of examples in the case $m=3$ (together with some extra condition) was produced by Iliev and Manivel in \cite{ilievmanivel}, together with a detailed study of their properties. This paper can be considered in fact as the beginning of this story.

We also point out that, from a purely Hodge-theoretical perspective, the Fano condition is in fact not necessary. It is of course possible to easily produce infinitely many examples of varieties of Kodaira dimension $-\infty$ of K3 type (for example, blowing up a FK3 or taking a projective bundle over it). Although unlikely, it is of course possible that there could be variety of general type as well of K3 type. However, it is customary to restrict to the Fano case, both for classification purposes and to use all the nice properties valid for this special class of varieties.

\subsection{Computing Hodge numbers}
Our definition of FK3 varieties is purely Hodge-theoretical, and at the same time does not pose any lattice-theoretical condition on the integral part of the Hodge structure. Hence, checking the Hodge numbers $h^{p,q}(X)$ is enough to determine the presence of a K3 structure. In general, for an arbitrary variety $X$, computing its Hodge numbers can be a difficult problem. However, the situation is (almost) algorithmic when $X$ can be written as $X=(G, \mF)$, with $G$ a homogeneous variety and $\mF$ a homogeneous, completely reducible, globally generated vector bundle. To begin with, if $X$ is cut by a general global section, $X$ is smooth and of the expected codimension, which is equal to the rank of $\mF$.

We briefly recall the strategy. Set rank$(\mF)=r$. For each $j \in \mathbb{N}$, we have the $j$-th exterior power of the conormal sequence
\begin{equation}
\label{wedgeKConormal}
0\rightarrow
\Sym^j \mF^\vee|_X \rightarrow
(\Sym^{j-1} \mF^\vee \otimes \Omega_G)|_X \rightarrow
\dotso \rightarrow
(\Sym^{j-k} \mF^\vee \otimes \Omega^k_G)|_X \rightarrow
\dotso \rightarrow
\Omega^j_G|_X \rightarrow
\Omega^j_X \rightarrow
0.
\end{equation}
To determine $h^i(\Omega^j_X)$, we can compute the dimensions of the cohomology groups of all the other terms in \eqref{wedgeKConormal}, split it into short exact sequences and use the induced long exact sequences in cohomology to get the result.

Each term $(\Sym^{j-k} \mF^\vee \otimes \Omega^k_G)|_X$ is in turn resolved by a Koszul complex
\begin{equation*}
0\rightarrow
\W^r \mF^\vee \otimes \Sym^{j-k} \mF^\vee \otimes \Omega^k_G \rightarrow
\dotso \rightarrow
\mF^\vee \otimes \Sym^{j-k} \mF^\vee \otimes \Omega^k_G \rightarrow
\Sym^{j-k} \mF^\vee \otimes \Omega^k_G.
\end{equation*}
In general, for any vector bundle $\mG$ we have a spectral sequence $E_*^{-q,p}$ with the first page  $E_1^{-q,p} = H^p(G, \mG \otimes \W^q \mF^{\vee})$ converging to $H^{p-q}(X, \mG|_X)$.

Since by hypothesis $\mF$ is completely reducible, then those terms are completely reducible as well: a decomposition can be found via suitable plethysms. The cohomology groups can be then obtained via the usual Borel--Weil--Bott Theorem, \cite{bott}, \cite{weyman}. Everything turns out to be particularly simple when $G=\Gr(k,n)$. In this case, most of these computations can be sped up by using computer algebra systems such as Macaulay2, \cite{M2}.

To see explicit examples of these computations we refer to \cite[3.3]{dft}, \cite[3.9.1]{eg2} or \cite[\S 3.2]{bft}.

\subsection{The triumvirate: Cubic fourfold, Gushel-Mukai fourfold and Debarre-Voisin 20-fold}

When speaking of FK3 varieties, there are three motivating examples which need to be mentioned as soon as possible: the \emph{cubic fourfold}, the \emph{Gushel-Mukai fourfold}\footnote{One can also construct a Gushel-Mukai 6-fold from a double cover of $\Gr(2,5)$.} and the \emph{Debarre-Voisin twentyfold}. They can be described, in our notation, as:
\begin{enumerate}
\item $(\PP^5, \of(3))$;
\item $(\Gr(2,5), \of(1) \oplus \of(2))$;
\item $(\Gr(3,10), \of(1)).$
\end{enumerate}

In other words, a cubic fourfold is constructed from a $f \in \Sym^3 V_6^{\vee}$, a Gushel-Mukai fourfold from a pair $(\varphi, \mu) \in \W^2 V_5^{\vee} \oplus \Sigma_{2,2}V_5^{\vee}$\footnote{The second space is exactly the quotient of $\Sym^2 \W^2 V_5^{\vee}$ by $V_5^{\vee}\otimes \det(V_5^{\vee})$, i.e. $H^0(\Gr(2,5), \of(2))$.}, and a Debarre-Voisin twentyfold from a $\sigma \in \W^3 V_{10}^{\vee}$. This representation-theoretical angle will be widely used throughout this survey.

 From a historical point of view, the cubic fourfold is the \emph{primus inter pares}. Being a hypersurface in a projective space, it is not hard to compute its Hodge numbers and check that $h^{1,3}=1$ and $h^{2,2}=21$, with its vanishing\footnote{The vanishing cohomology is defined in this case as the orthogonal to the classes from the ambient space, which in this case is only a power of the hyperplane class. In particular in this case it is the same as primitive cohomology.} component of rank 20\footnote{Exactly one more of the vanishing rank of the $H^2$ of a general K3 surface. Of course, not exactly a coincidence.}. Similar computations can be performed on the other two examples, which always happens to have a K3 structure with the same vanishing rank.

It is important to point out that every aspect that we will discuss in this survey (hyperk\"ahler geometry, derived category, rationality questions, etc.) are already present and often extremely well-developed for these three examples. There are in fact multiple and better surveys and papers where these questions are addressed, see e.g. \cite{huy,deb,dhov}\footnote{And many, many, many more.}, and we would like to invite the interested reader to consult them.

On the other hand, the purpose of this survey is to present new constructions of FK3, which have been more or less flourishing in the past few years. Therefore, we will just briefly recap some of the features of FK3 varieties we are mostly interested in, and which motivate our quest.
\subsection*{FK3: Hodge theory vs. derived categories}

The definition that we gave of FK3 is purely Hodge-theoretical. In fact, we are not imposing conditions on the full Hodge structure, but just on the  Hodge numbers. Our definition is loose on purpose, with the aim of including as many examples as possible. There are in fact other possible definitions of Fano varieties of K3 type, e.g. in the (stricter) categorical sense of Kuznetsov in \cite{kuzcy}. In general, a triangulated category $\mathcal{T}$ is said to be of K3 type if it has a Serre functor $S_{\mathcal{T}}$ with the property that $S_{\mathcal{T}}\cong [2]$. All the examples of the \emph{triumvirate} above have a Kuznetsov component $\mathcal{A}_X$ which is, in fact, a K3 subcategory: hence they are FK3 in all possible senses. It is important to point out that a FK3 in the categorical sense is as well a FK3 in our Hodge-theoretical sense, simply by taking Hochschild homology, as in, e.g. \cite[Lemma 3.5]{eg2}.

However, for most of the Fano which we will encounter the existence of a K3 subcategory is only conjectural, and in fact it can be quite difficult to prove, as we are going to see in the case of, e.g. K\"uchle (c5), or most of the interesting FK3 constructed in \cite{eg2}. In particular, the reader versed and interested in the categorical language can consider our definition of FK3 as a linear approximation of the categorical concept, and a hint on where to dig for new K3 categories, that is to say that our FK3 provide \emph{candidates} for finding K3 categories.

\subsection*{Link with hyperk\"ahler geometry} 
One of the main reasons (if not the most important one) for which we are interested in the study of FK3 varieties is the deep link with hyperk\"ahler geometry. A hyperk\"ahler manifold is a simply connected compact K\"ahler manifold\footnote{It is customary to give the definition in the complex geometry setting, since the general hyperk\"ahler is not projective, exactly as the case of the general K3 surface. However, we will only focus in the polarized case. The algebraically inclined reader is therefore encouraged to replace in their mind the words \emph{compact K\"ahler manifold} with \emph{smooth projective variety}.} with the additional property that $H^0(\Omega^2_X) \cong \C \sigma$, for a non degenerate holomorphic 2-form\footnote{This condition implies that a hyperk\"ahler manifold cannot be odd dimensional.} $\sigma$. Hyperk\"ahler manifolds in dimension 2 are K3 surfaces: in higher dimension however, there are just a few examples known. In particular, for any $n$, one\footnote{In particular, Beauville, \cite{beau}.} can construct two families of hyperk\"ahler manifolds, starting from a K3 surface and an abelian surface. These are often called K3$^{[n]}$ and $K_{n}(A)$\footnote{For the first one, one takes the Hilbert scheme of $n$ points on a K3 surface. In the second case, one takes the Hilbert scheme of $n+1$ points on an abelian surface $A$, and then the preimage of the 0 of the sum map $A^{[n+1]} \to A$. In both cases one deforms these varieties in order to get a locally complete example.}. There are then two sporadic examples, in dimension 6 and 10, constructed by O'Grady. No more examples which are not deformation equivalent to these are known. The search for new hyperk\"ahler manifolds, or more in general the study of their fascinating geometrical properties, is a very active subject in algebraic geometry. We refer to, e.g. \cite{debsur} for an overview.

The moral of the story is that, whenever one encounters a FK3, one should be able to construct (possibly many) examples of projective families of hyperk\"ahler, for example as moduli spaces of special objects in the given Fano variety. The archetypal example in this sense is the classical construction by Beauville and Donagi, \cite{bd}. Starting from a cubic fourfold $X$ as above, one can consider its variety of lines $F(X)$. This admits an easy description as $F(X)=(\Gr(2,6), \Sym^3 \mU^{\vee})$, and one can directly check that $F(X)$ is a hyperk\"ahler fourfold, moving in a locally complete family of deformations of varieties of K3$^{[n]}$ type. Other explicit constructions of hyperk\"ahler were carried out starting from both the Gushel-Mukai fourfold and the the Debarre-Voisin twentyfold, see e.g. \cite{deb} and \cite{dv}.

In fact, it was shown in \cite{km} that any moduli space of sheaves on a FK3 (in our Hodge-theoretical sense) carries a closed 2-form. The non-degeneracy of this 2-form is however in general difficult to check: on the other hand this happens to be true if the Fano is of K3 type in the categorical sense and if all the sheaves parameterized by this moduli space are objects of the subcategory $\mathcal{A}_X$\footnote{Once again, this shows how (Hodge-theoretical) FK3 can be considered an approximation for the stronger (categorical) FK3, and as a \emph{guiding light} if one is to look for hyperk\"ahler.}.

More recently, in \cite{blm+} and \cite{ppz} a theory of moduli spaces of Bridgeland semistable objects in the Kuznetsov component associated to a cubic fourfold and to a Gushel-Mukai fourfold was developed. In particular, infinite families of locally complete hyperk\"ahler manifolds of $K3^{[n]}$-type of different dimensions were constructed, and in \cite{lpz} the authors described a codimension one family\footnote{Which is a modular construction of a hyperk\"ahler compactification of the Lagrangian fibration constructed in \cite{lsv}.} in the moduli space of the 10-dimensional hyperk\"ahler example constructed by O'Grady in \cite{og99}.

We have little doubt that one could apply some generalization of these (and other) techniques to construct more families of hyperk\"ahler manifolds, which
in turn will be greatly helped by the production of more examples of FK3.

In all these techniques above, the link between the Fano and the hyperk\"ahler worlds goes always in the same direction, i.e. from \emph{Fano} to \emph{hyperk\"ahler}. We have recently seen an interesting way to \emph{reverse the arrow}. In \cite{fmos}, Flapan, Macr\`i, O'Grady and Sacc\`a explained a reverse process in which one starts with a hyperk\"ahler manifold X of K3$^{[n]}$-type and produces a corresponding FK3 variety arising as a connected component of the fixed locus of an antisymplectic involution on X. We expect their subsequent work to be extremely helpful for the production of new examples of FK3, further solidifying the bridge between the two worlds.

\subsection*{K3 structures and rationality of FK3}
The rationality of the cubic fourfold is a longstanding and intriguing open problem in algebraic geometry. Describing the many results on this topic falls way beyond the scope of this survey: we invite the interested reader to check, e.g. \cite{has, kuzsurvey, rs1, rs2, brs, a+}. In brief, the general cubic fourfold is expected to be irrational, but not a single example of irrational cubic fourfold is known. On the other hand, various examples of rational cubic fourfolds are known, but they all happen to be special in moduli. In fact, it is expected that rational cubic fourfolds belong to countable union of certain divisors $\mathcal{C}_d$ in the moduli space.
This \emph{speciality} condition has in fact everything to do with K3 structures. First of all, from the Hodge-theoretical viewpoint Hassett considered for certain $d$ called \emph{admissible} an associated K3 surface to a cubic fourfold $X \in \mathcal{C}_d$ with rank two sublattice $K \subset H^{2,2}(X,\Z)$ of discriminant $d$.

On the other hand, Kuznetsov constructed an associated K3 surface in the categorical sense to a cubic fourfold. In short, one defines the Kuznetsov component $\mathcal{A}_X$ as the right orthogonal to the exceptional collection given by $\lbrace \of_X, \of_X(1), \of_X(2) \rbrace$. This $\mathcal{A}_X$ is a K3 category, which in general is \emph{non-commutative}, i.e. not equivalent to $D^b(S)$, for $S$ a K3 surface. Kuznetsov's conjecture on the rationality of the cubic fourfold says that $X$ is rational exactly when $\mathcal{A}_X$ is in fact commutative (or \emph{geometric}). It was later shown by Addington and Thomas in \cite{at} that the geometricity of $\mathcal{A}_X$ is equivalent to $X$ belonging to some $ \mathcal{C}_d$, with $d$ admissible, linking the Hodge-theoretical and derived categorical worlds together.

A totally similar conjecture can be cooked up for Gushel-Mukai fourfolds, see \cite[Conjecture 3.12]{kp}. It is therefore natural to wonder if the same conjecture can be formulated for other FK3 fourfolds\footnote{For higher dimensional FK3 there should be no relations between the rationality of the varieties and the commutativity of the K3 structure, in any possible sense. Simplifying a lot, codimension two is what matters here, as in the classical case of the cubic threefold.}, even in the weaker Hodge-theoretical definition. As we are going to see later, in \cite{bfmt} we moved our first steps in this very direction.

\subsection*{Other topics: Chow rings and the Hodge conjecture for FK3}

Given a smooth projective variety $X$, let $A^i(X):=CH^i(X)_{\Q}$ denote the Chow groups of $X$ of cycles up to rational equivalence.
 The intersection product defines a ring structure on $A^\ast(X)=\bigoplus_i A^i(X)$, the Chow ring of $X$. In the case of K3 surfaces, the $\Q$-subalgebra $  R^\ast(X):=  \bigl\langle  A^1(X), c_j(X) \bigr\rangle \subset A^\ast(X) $ remarkably injects  into cohomology under the cycle class map, see \cite{BV}.
Motivated by the cases of K3 surfaces and abelian varieties, Beauville \cite{Beau3} conjectured that for certain special varieties, the Chow ring should admit a multiplicative splitting. To make concrete sense of Beauville's elusive ``splitting property conjecture'', Shen and Vial \cite{SV} have introduced the concept of \emph{multiplicative Chow--K\"unneth decomposition}. In \cite{lat20b} Laterveer conjectured that a FK3 should always admit such a \emph{multiplicative Chow--K\"unneth decomposition}. This has been verified for some of the FK3 constructed in \cite{eg2} see e.g. \cite{lat20b, latfm, latbol}. To this day, his work is still in progress, and we look forward to seeing more experimental verification of this conjecture. We invite the reader interested in Chow motives of FK3 varieties to check as well \cite{bp,b}.

Another topic which has been currently gaining momentum is the (integral) Hodge conjecture for FK3 varieties. No introduction should be needed on the Hodge conjecture, but we refer to \cite{voisinhodge} just in case. The integral Hodge conjecture (IHC) was already known to be true for cubic fourfolds, see \cite[Thm.18]{voisinhodge}. On the other hand, Mongardi and Ottem proved in \cite{mongardiottem} that for projective hyperk\"ahler of $K3^{[n]}$ or generalized Kummer type, the IHC holds for 1-cycles, a result which in turn can be used to re-prove the IHC for cubic fourfolds.

It is then natural to wonder if a similar result holds for other FK3 varieties. At the categorical level, some quite powerful progress has recently been made by Perry in \cite[Thm.1.1]{perry}. Starting from a Fano variety $X$ of (categorical) K3 type, he defined a version of the IHC for the Kuznetsov component $\mathcal{A}_X$\footnote{Which is a K3 subcategory.}. This is a statement variational in nature, i.e. asking for the K3 category to deform to a \emph{geometric} one, possibly up to a twist with a Brauer class, together with a certain integrality condition for a fixed Hodge class. His result implies in particular the (classical) IHC in degree 2 for the Gushel-Mukai fourfolds.

Finally, in \cite[Thm1.2]{bs} Benedetti and Song proved the IHC for the Debarre-Voisin twentyfold as well, this time using the Mongardi-Ottem result and hence the direct link with hyperk\"ahler geometry. In the same theorem, the authors of \emph{op.cit.} proved the IHC for another FK3, i.e. the Peskine variety $P$, which we will study in details later in this survey. It is interesting to remark that this $P$ is (at the moment) a FK3 only in our Hodge-theoretical sense\footnote{Although a categorification of the K3 structure has been long conjectured, for example by us in \cite[Conj. 21, Conj. 26]{bfm}.}. Therefore, we hope to be able in the future to prove the IHC for many other examples of (Hodge-theoretical) FK3, even when the existence of a K3 subcategory is yet to be determined.


\section{FK3 fourfolds}
In this section we will analyze all known FK3 fourfold, i.e. the most studied case. We will recap some classical and new results on the topic. 

\subsection{(Other) FK3 fourfolds of index $\iota_X>1$: Verra fourfold}

 Fano fourfolds of index $\iota_X>1$ have been classified, see e.g. \cite[Chapter 5]{ag5}, or \cite[4-6]{cgks} for an overview. In short, there are 35 such families: 1 of index 5, 1 of index 4, 6 of index 3 (out of which 5 with $\rho=1$) and 27 of index 2 (out of which 9 with $\rho=1$). We have already met the cubic fourfold and the Gushel-Mukai fourfold. There is another well-known example in literature of a FK3 fourfold, known as \emph{Verra fourfold}. This is a fourfold $V$ of index $\iota_V=2$ of Picard rank $\rho_V=2$. It can be described as the double cover of $\PP^2 \times \PP^2$ ramified along a smooth divisor of bi-degree $(2,2)$. Alternatively, $V$ can be described as a smooth quadratic section of the projective cone over $\PP^2 \times \PP^2$. 
 
 Verra fourfolds enjoy a rich geometry, see for example \cite{last}, \cite{ikkr}, \cite{laterverra}.
 The moduli spaces of Verra fourfolds is 19 dimensional (one less than the other mentioned FK3 examples). However, it turns out to be a particularly nice example. In fact, each of the two projections is a quadric fibration towards a base $\PP^2$, with ramification locus a (generically smooth) sextic curve. The double cover of each $\PP^2$ ramified over the said sextic is usually called an associated K3, $S_V$.
 
 This description implies that the K3 structure - both at the Hodge-theoretical and derived categorical level, cannot be identified with either one of the two prime FK3 fourfolds, nor coming directly from a K3 structure. In particular, (see \cite[5.1]{last}) from each projection one can deduce a semiorthogonal decomposition for the derived category $D^b(V)$, showing that it contains a piece equivalent to the Brauer-twisted derived category of the relevant associated K3 surface, $D^b(S_V, \beta)$.

Hyperk\"ahler fourfolds of K3$^{[2]}$-type can be constructed from the Verra fourfolds, for example starting from the Hilbert scheme of conics on $V$, \cite{ikkr}. As in the case of the cubic fourfold, the general Verra fourfold is conjectured to be irrational, with some special rational examples known. Moreover, the Hodge conjecture holds for them, \cite[2.3]{laterverra}.
 
 We give an easy description of the Verra fourfold as $(G, \mF)$, with $\mF$ and $G$ homogeneous, which was previously unknown to the best of our knowledge.
 \begin{proposition} A Verra fourfold $V$ can be described as $(\PP^2 \times \PP^2 \times \PP^9, \Lambda(0,0,1) \oplus \of(0,0,2))$, where $\Lambda$ is the bundle on $\PP^2 \times \PP^2$ of rank 8 defined as $\Lambda:= (V_3 \otimes \W^2 V_3) \otimes \of / \of(-1,-1)$ and then pulled back.
 \end{proposition}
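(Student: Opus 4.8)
The plan is to show that a general section of $\mF:=\Lambda(0,0,1)\oplus\of(0,0,2)$ simultaneously reproduces both descriptions of $V$ recalled above: the double cover of $\PP^2\times\PP^2$ branched over a $(2,2)$ divisor, and the quadratic section of the cone over the Segre image of $\PP^2\times\PP^2$. First I would fix the identification of the two factors, writing the first $\PP^2$ as $\PP(V_3)$ and the second as $\PP(\W^2 V_3)$, which is canonically the dual $\PP(V_3^\vee)$ via $\W^2 V_3\cong V_3^\vee\otimes\det V_3$. Under this reading the Segre embedding $\PP(V_3)\times\PP(\W^2 V_3)\hookrightarrow\PP(V_3\otimes\W^2 V_3)=\PP^8$ sends $([v],[\omega])$ to the decomposable tensor $[v\otimes\omega]$, i.e.\ (up to the harmless $\det V_3$ twist) to the rank-one endomorphisms of $V_3$; the tautological sub-line-bundle $\of(-1,-1)\hookrightarrow(V_3\otimes\W^2 V_3)\otimes\of$ is exactly the line spanned by $v\otimes\omega$, so $\Lambda$ is its rank-$8$ quotient and is globally generated.

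Next I would analyse a general section $s_1$ of $\Lambda(0,0,1)$. Writing $\PP^9=\PP(U)$ with $\dim U=10$ and twisting the defining sequence of $\Lambda$ by $\of(0,0,1)$, the vanishing $H^\bullet(\PP^2\times\PP^2,\of(-1,-1))=0$ (K\"unneth) gives $H^0(\Lambda(0,0,1))\cong(V_3\otimes\W^2 V_3)\otimes U^\vee=\Hom(U,V_3\otimes\W^2 V_3)$. Hence $s_1$ is a linear map $A\colon U\to V_3\otimes\W^2 V_3$, necessarily surjective with a one-dimensional kernel $\langle k\rangle$, and $Z(s_1)=\{([v],[\omega],[u]):A(u)\in\langle v\otimes\omega\rangle\}$. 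Projecting to $\PP^9$ exhibits $Z(s_1)$ as the (smooth, for general $s_1$) resolution of the cone $C$ over the Segre variety with vertex $[k]=\PP(\ker A)$: the map is an isomorphism over $C\setminus\{[k]\}$ and contracts the divisor $\PP^2\times\PP^2$ over $[k]$. Projecting instead to $\PP^2\times\PP^2$ realises $Z(s_1)$ as a $\PP^1$-bundle, since $A^{-1}(\langle v\otimes\omega\rangle)$ is two-dimensional. I would then add the quadric: a general $s_2\in H^0(\of(0,0,2))=\Sym^2 U^\vee$ is a quadric $Q\subset\PP^9$, and $V':=Z(s_1,s_2)$ is the zero locus of the general section $(s_1,s_2)$ of $\mF$, of the expected dimension $13-9=4$ and smooth by generic smoothness for the globally generated bundle $\mF$.

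It remains to identify $V'$ with the Verra fourfold. Restricting the $\PP^9$-projection gives $V'\to C\cap Q$; for general $Q$ the vertex $[k]$ lies off $Q$, so $V'\cong C\cap Q$ is a quadratic section of the cone, which is the second description of $V$. Equivalently, restricting the $\PP^1$-bundle to $V'$ yields a double cover $V'\to\PP^2\times\PP^2$: over $([v],[\omega])$ the line $\PP(A^{-1}\langle v\otimes\omega\rangle)$ meets $Q$ in two points. Choosing a splitting $U=\langle k\rangle\oplus U'$ with $A|_{U'}$ an isomorphism, so that the fibre line is $\langle k,w\rangle$ with $A(w)=v\otimes\omega$, the branch locus is the discriminant $b(k,w)^2-s_2(k,k)\,s_2(w,w)=0$, which is of bidegree $(2,2)$ on $\PP^2\times\PP^2$; for general $s_2$ it is smooth, so $V'$ is the branched double cover defining a Verra fourfold. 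The hard part is precisely that one must argue with a genuinely general section rather than the tautological one: the real content is the kernel/rank analysis of $A$ (producing the cone vertex) together with checking that the induced $(2,2)$ divisor is general — which follows from the surjectivity of the projection $\Sym^2(V_3\otimes\W^2 V_3)^\vee\to\Sym^2 V_3^\vee\otimes\Sym^2(\W^2 V_3)^\vee$ — and that the vertex is avoided so that the contracted $\PP^2\times\PP^2$ plays no role. A final dimension check, $35-16=19$ for double covers branched over $(2,2)$ divisors modulo $\Aut(\PP^2\times\PP^2)$, matches the quoted count and confirms that the general member fills the whole family.
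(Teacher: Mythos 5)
Your proof is correct, but it runs in the opposite direction from the paper's and is considerably more self-contained. The paper starts from the known presentation of the double cover as a quadric section of a projective bundle, $V=(\PP_{\PP^2 \times \PP^2}(\of \oplus \of(-1,-1)), \of_P(2))$, and then invokes the standard technique of \cite{dft} for rewriting a projectivized bundle as a zero locus inside $\PP^2\times\PP^2\times\PP^9$; the actual computation in the paper's proof is the manipulation of (pulled back) Euler sequences needed to identify the resulting rank-8 bundle with $\Lambda$ and, importantly, to exhibit $\Lambda$ as a homogeneous bundle, namely an extension of $\mQ_{\PP(\W^2 V_3)}^{\oplus 3}$ by $\mQ_{\PP(V_3)}(-1,0)$ --- the homogeneity being what feeds the Borel--Weil--Bott computation of Hodge numbers in the corollary that follows. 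You instead work forward from a general section of the proposed bundle: the identification $H^0(\Lambda(0,0,1))\cong\Hom(U,V_3\otimes\W^2 V_3)$, the kernel/rank analysis of $A$ exhibiting $Z(s_1)$ simultaneously as the resolution of the cone over the Segre variety and as a $\PP^1$-bundle (your splitting $U=\langle k\rangle\oplus U'$ recovers, implicitly, the paper's $\PP(\of\oplus\of(-1,-1))$, since the extension $0\to\of\to A^{-1}(\of(-1,-1))\to\of(-1,-1)\to 0$ splits), and then the discriminant computation for the restriction to the quadric. What your route buys is twofold: it proves both classical descriptions of $V$ (quadric section of the cone, and double cover branched in a $(2,2)$ divisor) directly from the zero-locus data, and --- via the surjectivity of $\Sym^2(V_3\otimes\W^2 V_3)^\vee\to\Sym^2 V_3^\vee\otimes\Sym^2(\W^2 V_3)^\vee$ --- it shows that \emph{every} Verra fourfold arises this way, a generality statement the paper leaves implicit. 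What it does not give, and what the paper's Euler-sequence argument is really there for, is the homogeneity of $\Lambda$; if you wanted to continue to the Hodge-number corollary by the paper's Borel--Weil--Bott method, you would still need to supply that extension description separately.
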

 \begin{proof}
 We follow closely the tools of e.g. \cite[Fano 2-2]{dft}. In fact, by definition, a Verra fourfold is the double cover of $\PP^2 \times \PP^2$, hence can be described as $(\PP_{\PP^2 \times \PP^2}(\of \oplus \of(-1, -1)), \of_P(2))$. From the (pulled back) Euler sequence on one of the $\PP^2$ we get a sequence \[ 0 \to \of(-1, -1) \to \of^{\oplus 3}(-1,0) \to \mQ_{\PP(V_3)}(-1,0) \to 0, \]
 hence a sequence \[ 0 \to \of^{\oplus 3}(-1,0) \to  (V_3 \otimes \W^2 V_3) \otimes \of) \to \mQ_{\PP(\W^2 V_3)}^{\oplus 3} \to 0. \]
 Notice that the bundle $\Lambda$ is homogeneous: it can be in fact realized as an extension in $$\Ext^1_{\PP^2 \times \PP^2}( \mQ_{\PP(\W^2 V_3)}^{\oplus 3},  \mQ_{\PP(V_3)}(-1,0)) $$.
 \end{proof}
 
 The above description allows for a computation of the Hodge numbers using the already described methods. These numbers were already computed in \cite[2.5]{laterverra}, with a little typo in $h^{2,2}$. Another strategy would be to use weak Lefschetz theorem in all cohomology groups but the central one, and then compare with the Euler characteristic, which can be checked to be equal to 30 from a motivic computation $e(V)= 2 e(\PP^2 \times \PP^2) - e(V_t)$, with $V_t$ being the Fano 3-fold 2-6\footnote{In Mukai's notation, see \cite{fanography}.} (a \emph{Verra 3-fold}).
 \begin{corollary} The non-zero Hodge numbers for $V$ are $h^{0,0}=h^{4,4}=1$, $h^{1,1}=h^{3,3}=2$, $h^{1,3}=h^{3,1}=1$, $h^{2,2}=22$.
 \end{corollary}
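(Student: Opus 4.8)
The plan is to determine every Hodge number away from the middle cohomology $H^4(V)$ by a Lefschetz-type argument, to fix the shape of $H^4(V)$ using the Fano and K3-type constraints, and finally to read off the single remaining unknown $h^{2,2}$ from the topological Euler characteristic. First I would exploit the realization of $V$ as a divisor in $|\of_P(2)|$ on $P=\PP_{\PP^2\times\PP^2}(\of\oplus\of(-1,-1))$, equivalently as the double cover $\pi\colon V\to\PP^2\times\PP^2$ branched along a smooth $(2,2)$-divisor. A weak-Lefschetz statement then identifies $H^k(V,\C)$ with $H^k(\PP^2\times\PP^2,\C)$ for $k<4$, and dually for $k>4$. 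Since $H^*(\PP^2\times\PP^2)$ is of Hodge-Tate type with Poincar\'e polynomial $(1+t^2+t^4)^2$, this immediately gives $h^{0,0}=h^{4,4}=1$, $h^{1,1}=h^{3,3}=2$ (so that $\rho_V=2$, as claimed), the vanishing of all $h^{p,q}$ with $p+q$ odd, and $h^{2,0}=h^{0,2}=0$. All the outer Hodge numbers are thereby settled, leaving only the five entries of $H^4(V)$.

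Next I would constrain $H^4(V)$. Because $V$ is Fano we have $h^{p,0}(V)=0$ for $p>0$, so $h^{4,0}=h^{0,4}=0$; and because $V$ is of K3 type its weight-$4$ Hodge structure has level $2$ with $h^{3,1}=h^{1,3}=1$. Hence
\[
b_4(V)=h^{4,0}+h^{3,1}+h^{2,2}+h^{1,3}+h^{0,4}=2+h^{2,2},
\]
and the whole problem reduces to computing $b_4$, since $b_0=b_8=1$, $b_2=b_6=2$ and all odd Betti numbers vanish. Summing with signs, $e(V)=\sum_k(-1)^k b_k=6+b_4$.

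To obtain $b_4$ I would compute $e(V)$ from the branched-cover structure. The branch divisor $B\subset\PP^2\times\PP^2$ is exactly the Verra threefold $V_t$ (the Fano threefold 2-6), and stratifying $\pi$ into its \'etale and ramified parts gives $e(V)=2\,e(\PP^2\times\PP^2)-e(V_t)=18-e(V_t)$. A Chern-class computation for the smooth $(2,2)$-divisor $B$, via $c(T_B)=c(T_{\PP^2\times\PP^2})|_B/(1+c_1(\of_B(2,2)))$ integrated against $[B]$, yields $e(V_t)=-12$, whence $e(V)=30$, so $b_4=24$ and therefore $h^{2,2}=b_4-2=22$, as asserted.

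The main obstacle is the very first step: the bundle $\Lambda(0,0,1)\oplus\of(0,0,2)$ cutting out $V$ is only globally generated, not ample (it is relatively positive in the $\PP^9$-direction but merely nef along $\PP^2\times\PP^2$), so the naive Lefschetz hyperplane theorem does not apply and $\of_P(2)$ is only big and nef on $P$, contracting the negative section to the vertex of the cone over $\PP^2\times\PP^2$. I would justify the isomorphisms $H^k(V)\cong H^k(\PP^2\times\PP^2)$ for $k<4$ either by a Lefschetz argument for the smooth quadric section of that cone (which avoids the vertex), or, more robustly, by running the Borel--Weil--Bott machinery recalled above directly on $(\PP^2\times\PP^2\times\PP^9,\ \Lambda(0,0,1)\oplus\of(0,0,2))$: the conormal filtration together with the Koszul resolutions expresses each $H^i(\Omega^j_V)$ in terms of cohomology of completely reducible homogeneous bundles on the product, where Bott vanishing settles the outer numbers unconditionally. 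This second route also computes the full $H^4(V)$ directly, recovering $h^{3,1}=1$ without invoking the K3-type hypothesis and recomputing $h^{2,2}=22$ as an independent cross-check against the typo in $h^{2,2}$ recorded in \cite[2.5]{laterverra}.
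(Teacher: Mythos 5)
Your proposal is correct and takes essentially the same route as the paper, which likewise settles everything outside the middle degree by weak Lefschetz and then fixes $h^{2,2}$ by comparing with the Euler characteristic computed motivically from the double cover, $e(V)=2e(\PP^2\times\PP^2)-e(V_t)=30$, keeping the Borel--Weil--Bott machinery on the zero-locus description of the Proposition as the systematic alternative. Your explicit evaluation $e(V_t)=-12$, hence $b_4=24$ and $h^{2,2}=22$, agrees with the paper, and your closing observation that the BWB route recovers $h^{3,1}=1$ independently disposes of the only delicate point, namely that quoting the K3-type property for $h^{3,1}$ would otherwise be circular.
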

 
 Among these 35 families there are no other FK3. We now discuss the remaining cases in dimension 4, i.e. the index 1 case.

\subsection{K\"uchle FK3 fourfolds of index $\iota_X=1$}
Fano fourfold of index $\iota_X=1$ are not classified (in fact, they are the first open case). There are several attempts at a classification, most notably using mirror symmetry, see e.g. \cite{mirror}, but we are quite far from achieving any final result on the topic. There are lists including hundreds of examples and classification in special subcases though, including e.g. \cite{batyrev}, \cite{kalashnikov}, \cite{ckp}.

A small but significant list was produced by K\"uchle in \cite{kuchle}, who was able to classify all Fano fourfolds of index $\iota_X=1$ that can be obtained as $X=(G, \mF)$, with $G=\Gr(k,n)$ and $\mF$ homogeneous, globally generated and completely reducible (not counting the already well-known complete intersections in $\PP^n$).

The result is a list consisting of 17 families of Fano fourfolds \footnote{The original list contained 18 families. However, recently Manivel proved in \cite{manivel} that the only two families with the same invariants can be identified.}, with a rich geometry and interesting properties. Three families out of the 17 are FK3. Using the original K\"uchle notation, they are:
\begin{itemize}
\item (c7): $(\Gr(3,8), \W^3 \mQ \oplus \of(1))$;
\item (d3): $(\Gr(5,10), (\W^2 \mU^{\vee})^{\oplus 2} \oplus \of(1))$;
\item (c5): $(\Gr(3,7), \W^2 \mU^{\vee} \oplus \W^3\mQ \oplus \of(1))$.
\end{itemize}

Fano c7 and d3 have Picard rank greater than one, where c5 is prime: most importantly, Kuznetsov in \cite{kuzpicard} showed that these two families are birational to much simpler varieties. Namely, we have the following result.
\begin{thm}[3.5 and 4.11 in \cite{kuzpicard}]
If $Z$ is a Fano of type $c7$, then $Z \cong \Bl_{v_2(\PP^2)}  X$, the blow up of a special cubic fourfold $X$ with center a Veronese surface. If $Z$ is a Fano $d3$, then $Z \cong \Bl_S (\PP^1)^4$, where the center of the blow up is a K3 surface obtained as the intersection of two linear sections. 
\end{thm}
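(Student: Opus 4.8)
The plan is to prove each isomorphism by exhibiting an explicit birational contraction of $Z$ onto the stated target and recognizing it as the blow-down of a smooth surface. Both K\"uchle families have Picard rank at least two (for $c7$ one checks $\rho=2$, exactly matching the blow-up of a Picard-rank-one cubic fourfold; for $d3$ one finds the larger Picard rank of a blow-up of $(\PP^1)^4$), so by the cone theorem there is an extremal contraction of $Z$ distinct from the Pl\"ucker polarization inherited from $\Gr(k,n)$. The strategy is therefore: (i) read off the relevant nef divisors on $Z$ from the restrictions of the tautological bundles and of $\of(1)$; (ii) identify the morphism $\pi$ attached to a face of the nef cone with the candidate blow-down; (iii) show $\pi$ is an isomorphism outside a divisor $E$, that $E\to\pi(E)$ is a $\PP^1$-bundle over a smooth surface with fibrewise normal bundle $\of(-1)$, and conclude via the standard characterization of smooth blow-ups (Nakano--Fujiki / Castelnuovo-type contraction).

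For $c7$, I would first translate the defining data representation-theoretically: by Borel--Weil--Bott the section of $\W^3\mQ$ is a three-vector $\sigma\in\W^3 V_8$ and the section of $\of(1)$ is a Pl\"ucker hyperplane $f\in\W^3 V_8^\vee$, so $Z$ is the locus of $3$-planes $U\subset V_8$ lying in the hyperplane $f$ and annihilating $\sigma$ modulo $U$. From the pair $(\sigma,f)$ I would build a canonical six-dimensional space $W_6$ together with a cubic form on it, producing the cubic $X\subset\PP(W_6)=\PP^5$ by a suitable double contraction/Pfaffian construction, and define $\pi\colon Z\to\PP^5$ by the induced linear system. The decisive point is the fibre analysis: a generic $U$ should be recoverable uniquely from its image (birationality), while over the locus where the auxiliary symmetric data attached to $\sigma$ degenerates to rank one the fibre jumps to a $\PP^1$. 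That rank-one locus is exactly a Veronese surface $v_2(\PP^2)\subset X$, so $E\to v_2(\PP^2)$ is the expected $\PP^1$-bundle and $Z\cong\Bl_{v_2(\PP^2)}X$.

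For $d3$, the two sections of $\W^2\mU^\vee$ give a pencil of skew forms $\langle\omega_1,\omega_2\rangle\subset\W^2 V_{10}^\vee$, so before imposing $\of(1)$ the zero locus is the fivefold parametrizing $5$-planes isotropic for every member of the pencil. The Pfaffian $\Pf(\lambda\omega_1+\mu\omega_2)$ is a quintic on $\PP^1$ whose roots are the degenerate forms; recording, for a common isotropic plane, how it meets the kernels of the degenerate members produces a morphism $\pi\colon Z\to(\PP^1)^4$. Intersecting with the Pl\"ucker hyperplane $\of(1)$ cuts this down to the fourfold $Z$ and makes $\pi$ birational, with jumping locus a surface $S$. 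One then identifies $S$ with the complete intersection of two divisors of type $(1,1,1,1)$ in $(\PP^1)^4$: by adjunction $K_S$ is trivial and $S$ is simply connected, hence a K3 (``intersection of two linear sections''), and the contraction criterion again yields $Z\cong\Bl_S(\PP^1)^4$.

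The main obstacle in both cases is step (iii): constructing the morphism $\pi$ explicitly from the tensorial data and proving it is a \emph{smooth} divisorial contraction. Concretely, one must control the fibre dimension of $\pi$ over the special locus --- showing the generic fibre is a reduced point while the fibre over the degeneracy surface is exactly $\PP^1$ --- and verify that the contracted divisor $E$ is a $\PP^1$-bundle with the normal bundle forcing a smooth blow-up. This is a delicate local and degeneracy-locus computation, and it is precisely where the incidence geometry of $\sigma$ and $f$ (respectively the kernel loci of the Pfaffian pencil) must be matched against the classical models to recognize the center as the Veronese surface (respectively the complete-intersection K3).
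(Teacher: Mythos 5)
Your proposal has the right overall shape (work equivariantly with the tensor data, exhibit an explicit contraction, recognize a smooth blow-up), and your (d3) sketch is genuinely close to the argument in Kuznetsov's paper that the survey cites. But in both cases the decisive input is exactly what you defer to your ``step (iii)'', and for (c7) that input is a normal-form fact you never invoke, so the gap is real. For (c7) everything rests on the classical statement that a \emph{generic} trivector $\sigma\in\W^3V_8$ is $\GL(V_8)$-equivalent to the Cartan trivector of $\mathfrak{sl}_3$: $\sigma$ canonically endows $V_8$ with the structure of $\mathfrak{sl}(W_3)$ for a $3$-dimensional space $W_3$. This is what produces your ``canonical six-dimensional space'' (it is $\Sym^2 W_3^{\vee}$, the space of conics on $\PP(W_3)$), the map to $\Gr(3,V_8)$ (a conic $q$ of rank $\geq 2$ goes to the $3$-dimensional Lie algebra $\mathfrak{so}(q)\subset\mathfrak{sl}(W_3)$), the identification of the fivefold $(\Gr(3,8),\W^3\mQ)$ with $\Bl_{v_2(\PP^2)}\PP^5$ (the assignment $q\mapsto\mathfrak{so}(q)$ degenerates precisely on the rank-one conics, i.e.\ on the Veronese), and the computation that the Pl\"ucker bundle pulls back to $3H-E$, so that the remaining section of $\of(1)$ cuts the strict transform of a cubic fourfold containing $v_2(\PP^2)$. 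Your ``double contraction/Pfaffian construction'' does not exist here (no Pfaffians occur in the (c7) story), and note also that the cubic is not canonically attached to $\sigma$: $\sigma$ alone provides the $\PP^5$, the Veronese and the linear system $|3H-E|$, while the cubic fourfold depends on the hyperplane $f$.

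For (d3) your ingredients (Pfaffian quintic, five $2$-dimensional kernels $K_i$) are correct, but you stop just short of the statement that makes the contraction machinery unnecessary: for a generic pencil the kernels are mutually orthogonal for the whole pencil and give a direct sum $V_{10}=K_1\oplus\cdots\oplus K_5$, and every common isotropic $5$-space $U$ meets each $K_i$ in exactly a line (isotropy forces $U\cap K_i\neq 0$, since $\omega_{\lambda_i}$ descends to a symplectic form on $V_{10}/K_i$ and the image of $U$ there is isotropic of dimension $\leq 4$; and $K_i\not\subset U$ because the pencil is nonzero on $K_i$). Hence the fivefold $(\Gr(5,10),(\W^2\mU^{\vee})^{\oplus 2})$ is exactly $(\PP^1)^5$ --- not merely mapped to a product of lines --- and the Pl\"ucker bundle restricts to $\of(1,1,1,1,1)$, so the K\"uchle fourfold is the divisor $\{f_0x_0+f_1x_1=0\}$ with $f_0,f_1$ of multidegree $(1,1,1,1)$. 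Projection off the last factor is then \emph{visibly} the blow-up of $(\PP^1)^4$ along the K3 surface $\{f_0=f_1=0\}$; no cone theorem, nef-cone analysis, or smooth-blow-up criterion is needed (also, your map ``recording how $U$ meets the kernels'' naturally lands in $(\PP^1)^5$, not $(\PP^1)^4$; the map to $(\PP^1)^4$ is that isomorphism followed by a projection). In short: the content of the theorem is the two normal-form facts for generic tensors and the resulting explicit identifications of the ambient fivefolds; the MMP framework you set up cannot be run without them, and with them it is not needed.
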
 

These identifications led to easy semiorthogonal decomposition of the derived category for both examples. Moreover, their rationality is obvious in one case, and reduces to the rationality of such a special cubic fourfold in the other case.

The case of Fano c5 is very different. It has Picard rank 1, which strongly points to a K3 structure different from the one of a cubic fourfold or a Gushel-Mukai. Moreover, by Hodge-theoretical reasons, for a generic c5 its K3 structure cannot be equivalent to the one of a (twisted) K3 surface.
Many interesting properties of this fourfold have been thoroughly studied by Kuznetsov in \cite{kuznetsovc5}.

It is expected that the c5 should be in many ways similar to both the cubic and the Gushel-Mukai fourfold, in particular with respect to hyperk\"ahler geometry and rationality properties. However, not a single rational example of c5 fourfold has been constructed. Also, it has long been conjectured that one should be able to construct a hyperk\"ahler of K3$^{[2]}$ type from the Hilbert scheme of twisted cubics contained in the  c5, but no substantial progress has been made yet.

There is another substantial difference between the cubic/GM duo and the $c5$, at the level of the derived category. The first two can be both considered as half-anticanonical sections of a fivefold ($\PP^5$ and a hyperplane section of $\Gr(2,5)$) where the derived category admits a rectangular Lefschetz decomposition. This, in turn, is enough to determine the structure of the derived categories of the corresponding fourfolds.
Of course the c5 fourfold can be interpreted as well as a half-anticanonical section of a Fano fivefold, which is simply $C_5:= (\Gr(3,7), \W^2 \mU^{\vee} \oplus \W^3\mQ )$. but the derived category is yet to be understood.

Kuznetsov studied the geometry of this fivefold in detail in \cite{kuznetsovc5}. He showed that the blow up $C_5$ with center the flag $\Fl(1,2,3)$ can be realized as the blow up of a hyperplane section of $\SGr(3,6)$ in a $\PP^1$-bundle over a DP$_6$. This is enough to show that the Chow motive of $C_5$ is of Lefschetz type, but not enough to describe completely its derived category.
The birational diagram restricts to the fourfold level, but with some caveat: in fact one gets that the blow up of the $c5$ fourfold $X_4$ in a DP$_6$ surface is birational to a singular fourfold, whose singular locus is a curve of genus 37. As a matter of fact, we have then a normal singular fourfold whose resolution of singularities is of K3 type. As we are going to see, this is definitely not an isolated phenomenon.

There is another difference between the $C_5$ fivefold and the other two: in fact both $\PP^5$ and a hyperplane section of $\Gr(2,5)$ are rigid, whereas $C_5$ has a 5-dimensional moduli space. With Claudio Onorati, we computed that $H^1(T_{C_5}|_X) \cong H^1(T_{C_5}) $, and this injects in the 25-dimensional infinitesimal deformation space  $H^1(T_X)$, which in fact splits as $H^1(T_X) \cong H^1(T_{C_5}) \oplus H^0(N_{X/C_5})$.
More precisely, we have the following
\begin{proposition}\cite{fo} Let $X, C$ be the K\"uchle $c5$ fourfold and fivefold as above. The image of the infinitesimal period map \[ \delta: H^1(T_X) \to Hom(H^{3,1}(X), H^{2,2}(X))\] coincides with the vanishing subspace $H^{2,2}_{v}(X)$ and the 5-dimensional kernel isomorphic to $H^1(T_C)$. Hence, the restricted period map \[ \overline{\delta}: H^1(T_X)/\ker \delta \cong  H^0(N_{X/C}) \to H^{2,2}_{v}(X)\]
is an isomorphism.
\end{proposition}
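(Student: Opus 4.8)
The plan is to reduce the whole statement to a single computation — the injectivity of the period differential along the directions that move $X$ inside $C$ — and then to extract the kernel and the isomorphism $\overline\delta$ purely formally from the splitting $H^1(T_X) \cong H^1(T_C) \oplus H^0(N_{X/C})$ recalled above (with $\dim H^1(T_C) = 5$, $\dim H^0(N_{X/C}) = 20$, $\dim H^1(T_X) = 25$). Throughout I identify $\Hom(H^{3,1}(X), H^{2,2}(X))$ with $H^{2,2}(X)$ via the chosen generator of the one-dimensional space $H^{3,1}(X)$, so that $\delta(\xi)$ becomes cup product of $\xi$ with this fixed class.

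First I would record the soft inclusion $\im\delta \subseteq H^{2,2}_v(X)$. The vanishing cohomology is the orthogonal complement of the (monodromy-invariant, hence constant) algebraic classes pulled back from $\Gr(3,7)$, so it underlies a sub-variation of Hodge structure of the universal family, namely the K3-type piece $K \subset H^4$ with $K^{3,1} = H^{3,1}(X)$ and $K^{2,2} = H^{2,2}_v(X)$. Since $\delta$ is the differential of the period map and $K$ deforms as a sub-Hodge structure, Griffiths transversality forces $\delta(\xi)\bigl(H^{3,1}(X)\bigr) \subseteq K^{2,2} = H^{2,2}_v(X)$ for every $\xi$. In particular $\dim\im\delta \le 20$, hence $\dim\ker\delta \ge 5$.

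The heart of the argument — and the main obstacle — is to show that the restriction
\[ \overline{\delta}\colon H^0(N_{X/C}) \longrightarrow H^{2,2}_v(X) \]
is an isomorphism; as both spaces have dimension $20$ it suffices to prove injectivity. Here I would use the Griffiths--Green residue calculus for the smooth divisor $X \in |\of_C(1)|$, exploiting that $X$ is a half-anticanonical section, so that $-K_C = \of_C(2)$. Writing $L = \of_C(1)$ and $X = \{s=0\}$, the adjunction $K_C \otimes L^{2} \cong \of_C$ identifies the generator of $H^{3,1}(X)$ with the pole-order-two residue $\Res(\Omega/s^{2})$, where $\Omega$ generates $H^0(C, K_C \otimes L^{2}) = H^0(C,\of_C)$; the pole-order-three part then identifies $H^{2,2}_v(X)$ with a Jacobian-type cokernel of line-bundle cohomology on $C$, and $\overline\delta$ with the multiplication $\dot s \mapsto \Res(-2\,\dot s\,\Omega/s^{3})$, i.e. the class of $\dot s\in H^0(C,L)$ in that cokernel. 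Since $N_{X/C} = L|_X$ and $H^1(C,\of_C)=0$, one has $H^0(N_{X/C}) = H^0(C,L)/\C\!\cdot\!s$, so $\overline\delta$ is precisely the comparison of two $20$-dimensional quotients of $H^0(C,L)$. Making this rigorous requires the conormal-sequence and Koszul machinery recalled above together with Borel--Weil--Bott on $\Gr(3,7)$: one must verify the higher-cohomology vanishings that make the residue description exact (so that the cokernel genuinely computes $H^{2,2}_v$ with no spurious ambient corrections) and then check that the resulting multiplication map has no kernel. This last step is the delicate one, and I expect the explicit plethysm and Borel--Weil--Bott computations, assisted by Macaulay2, to be indispensable.

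Granting injectivity of $\overline\delta$, the remainder is formal. Then $\im\delta \supseteq \overline\delta\bigl(H^0(N_{X/C})\bigr) = H^{2,2}_v(X)$, which with the soft inclusion yields $\im\delta = H^{2,2}_v(X)$; hence $\dim\ker\delta = 25 - 20 = 5$. Because $\overline\delta$ is injective, $\ker\delta \cap H^0(N_{X/C}) = 0$, so the projection $H^1(T_X) \twoheadrightarrow H^1(T_C)$ coming from the direct-sum decomposition restricts to an injection $\ker\delta \hookrightarrow H^1(T_C)$ of equal-dimensional spaces, hence an isomorphism $\ker\delta \cong H^1(T_C)$. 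Finally $\ker\delta \cap H^0(N_{X/C}) = 0$ and the dimension count give $H^1(T_X) = \ker\delta \oplus H^0(N_{X/C})$, so $H^1(T_X)/\ker\delta \cong H^0(N_{X/C})$ and $\overline\delta$ is the asserted isomorphism.
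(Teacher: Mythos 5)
You should first be aware that the survey itself contains no proof of this proposition: it is quoted from \cite{fo} (Fatighenti--Onorati, \emph{Torelli theorem for Fano fourfold of c5-type}), which is listed as \emph{in preparation}, and the surrounding text only records the input data $\dim H^1(T_X)=25$, $\dim H^1(T_{C})=5$, $H^1(T_C|_X)\cong H^1(T_C)$ and the splitting $H^1(T_X)\cong H^1(T_{C})\oplus H^0(N_{X/C})$. So there is no argument in the paper to compare yours against, and the proposal must stand on its own. Its formal skeleton does stand: identifying $\Hom(H^{3,1},H^{2,2})$ with $H^{2,2}$, the soft inclusion $\im\delta\subseteq H^{2,2}_{v}(X)$ via flatness of the ambient classes (note that this step already leans on the splitting, since one needs \emph{every} first-order deformation of $X$ to keep the restricted Schubert classes of type $(2,2)$, which holds because all such deformations come from deforming the pair $(C,s)$ inside $\Gr(3,7)$), and the linear algebra extracting $\im\delta=H^{2,2}_{v}(X)$, $\dim\ker\delta=5$, $\ker\delta\cong H^1(T_C)$ and the isomorphism $\overline{\delta}$ from the single claim that $\delta$ is injective on $H^0(N_{X/C})$ are all correct, and any proof would have this shape.

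The genuine gap is that this single claim --- which, dimensions being equal, \emph{is} the proposition --- is never proved, only reformulated. Your residue-calculus reduction hides two unverified ingredients. First, the pole-order/Jacobian description of $H^{2,2}_{v}(X)$ for $X\in|\of_C(1)|$ is not automatic for an arbitrary Fano ambient: it requires Green--Konno-type vanishings for twists of $\Omega^p_C$ by powers of $\of_C(1)$, and here $\of_C(1)$ is only half-anticanonical on $C=(\Gr(3,7),\W^2\mU^{\vee}\oplus\W^3\mQ)$, far below the ``sufficiently ample'' range of the general theorems, so these vanishings must actually be established via the Koszul resolution of $C$ and Borel--Weil--Bott on $\Gr(3,7)$. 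Second, even granting that description, what it gives is $H^{2,2}_v(X)\cong H^0(C,L)/J_1$ with $J_1=\C\cdot s+\im\bigl(H^0(T_C)\to H^0(C,L)\bigr)$, so injectivity of $\overline{\delta}$ is equivalent to $J_1=\C\cdot s$; this is consistent with the splitting you invoke, but it is a statement that has to be checked against it, not assumed. You explicitly defer both points (``one must verify the higher-cohomology vanishings\dots'', ``I expect the explicit plethysm and Borel--Weil--Bott computations\dots to be indispensable''), so what you have is a correct and well-organized reduction of the proposition to its hardest step, together with the right toolbox for attacking that step --- but not a proof.
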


As a part of our ongoing project, we conjecture that starting from some special and possibly singular $X$ one can produce a map towards the moduli space of K3 surface of genus 12, with only finite fibers, hopefully giving us some intuition on the behavior of the period map in the general case.
\subsection{Other FK3 fourfolds in product of Grassmannians}
One of the most natural generalizations of K\"uchle's list is to consider Fano fourfolds obtained as zero loci of globally generated homogeneous vector bundles on product of Grassmannians (or more generally, type A flag varieties). If one is to pursue such classifications via e.g. representation-theoretical arguments as in K\"uchle's work, it is sensible to put extra conditions. For example, one might ask that, for $X=(G, \mF)$, the line bundle $K_G^{\vee} \otimes \det(\mF^{\vee})$ is already ample on $G$, hence before the restriction to $X$. Whenever $X$ satisfies this condition, we call $X$ \emph{strongly Fano}. Moreover, if $\mF$ is completely reducible, we call $X$ \emph{strongly reducible} (or SR) Fano.

SR Fano varieties are particularly interesting from a \emph{botanical} point of view: they can be classified - at least in principle - for example by the means of representation-theoretical arguments, computer algebra and clever boundedness tricks. Moreover, it is expected that they form a substantial percentage of the whole zoo of Fano varieties. For example, approximately 80\% of all Fano in dimension up to three admits a description as SR Fano. This is the point made in \cite{dft}, and we would expect a similar ratio in higher dimension as well.

As part of a long-term project, we plan to classify all SR Fano in dimension 4. This is a non-trivial endeavour, that will require some serious geometric understanding of the geometry of these varieties. As a first, partial, result, together with Bernardara, Manivel and Tanturri we produced a list of 64 families of (SR) FK3 fourfolds of index 1, of which 61 were previously unknown, at least in these terms\footnote{The three \emph{extra} are example (d3) in \cite{kuzpicard} and examples (b1) and (b2) in \cite{eg2}. We decided to include them, since we added some interesting geometrical information. We did not include in our list all the other known examples of FK3 fourfolds, including e.g. the cubic fourfold.}. Unsurprisingly, they all have Picard rank $\rho>1$.
Most of these families are in fact simple birational modifications of the above well-known examples. Nevertheless, their geometry sometimes happens to be quite interesting.
We subdivided our examples in four different disjoint groups, namely Fano of \textbf{C, GM, K3} or \textbf{R} origin.

Fano of \textbf{C} origin admits at least one birational map to a (either special or general) cubic fourfold. Similarly, Fano of \textbf{GM} origin  admit at least one birational map to a (either special or general) Gushel'-Mukai (GM) fourfold. We remark that the two sets are disjoint: in other words, no Fano in our list is birational to both a cubic and a GM fourfold.
From a rationality viewpoint, for Fano of C and GM origin, there is no surprise: we either recover the rationality of known special examples or we are unable to conclude.

Fano of \textbf{K3} origin are varieties which do not belong to any of the two previous groups, but  can be realized as blow ups of other Fano with central Hodge structure along a (sometimes non-minimal) K3 surface.
Varieties in each of these first three groups share many properties among each other, and we can often understand their K3 structure from the birational map in question. For almost every Fano of K3 origin, we can immediately conclude about their rationality. There are two families however, for which we cannot: their rationality is equivalent to the rationality of the Fano fourfolds of index 2 and degree 16, or the stable rationality of a smooth cubic threefold, which are open problems.

Finally, there is a small bunch of Fano of Picard rank 2 of \textbf{R} (or \emph{rogue}) origin that do not fall into any of the previous categories. They are in fact birational to singular fourfolds, and we can understand their K3 structure from their second projection (either a DP$_5$-fibration or a conic bundle). Summing up, we have:
\begin{enumerate}
    \item 17 Fano of \textbf{C} origin;
    \item 6 Fano of \textbf{GM} origin;
    \item 37 Fano of \textbf{K3} origin;
    \item 4 Fano of \textbf{R} origin.
\end{enumerate}
We invite the interested reader to consult \cite{bfmt} in order to understand in detail all these examples. Here, we only write down the four Fano of  \textbf{R} origin, which are the most mysterious (and unexpected) of the lot. They are (in the notations of the mentioned paper):
\begin{enumerate}
\item \textbf{R-61}(\#2-27-99-2): $(\PP^2 \times \Gr(2,6), \mU^{\vee}_{\Gr(2,6)}(0,1) \oplus \mQ_{\PP^2} \boxtimes \mU^{\vee}_{\Gr(2,6)})$;
\item \textbf{R-62}(\#2-33-129-2): $(\PP^3 \times \Gr(2,7),\of(0,1)^ {\oplus 2} \oplus \of(1,1) \oplus \mQ_{\PP^3}\boxtimes \mU^{\vee}_{\Gr(2,7)})$;
\item \textbf{R-63}(\#3-31-120-2-B): $(\PP^5 \times \PP^7,\mQ_{\PP^5}(0,1) \oplus \of(0,2) \oplus \of(2,0)^{\oplus 2})$;
\item \textbf{R-64}(\#3-25-90-2): $(\PP^4 \times \PP^6,\mQ_{\PP^4}(0,1) \oplus \of(3,0) \oplus \of(0,2))$.
\end{enumerate}
These four families of Fano behave all in a different way among each other. \textbf{R-61} can be partially understood by looking at the projection to $\PP^2$, which happens to be a DP$_5$-fibration. This (positively) settles the rationality question, and lets us describe the derived category as 6 exceptional objects together with the derived category of a certain non-Galois cover of $\PP^2$, which we conjecture to be the blow up of a K3 surface in three points.
\textbf{R-62} is quite mysterious: we understand the map to $\PP^2$ as a conic bundle with discriminant a quintic surface, but we only have a partial understanding of where the K3 structure (both at the Hodge and categorical level) comes from\footnote{In fact, the quintic surface is singular (nodal) in 16 points. If one takes the double cover of this quintic, one gets a surface of general type $D$, whose anti-invariant part in the second cohomology $H^2(D, \Z)^{-}$ is of K3 type and of rank 23, see \cite{huybrechtsQ}.}. Also, we cannot conclude anything on its rationality. In the last two examples, \textbf{R-63} and \textbf{R-64}, we understand pretty well the K3 structures: they can be constructed as conic bundles over (respectively) the complete intersection of two quadrics and a cubic threefold, with discriminant locus a K3 surface in both cases. In particular, their derived category contains a copy of the derived category of a K3 surface. However, we cannot say anything about their rationality yet, making them particularly nice candidates to study the generalized Kuznetsov conjecture.

We also remark that all these fourfolds admit birational projections towards certain singular fourfolds (e.g. certain singular complete intersections of three quadrics). It would therefore be interesting to study these fourfolds \emph{per se}, and understand if new K3 structure can be constructed from them.

\section{The higher dimensional landscape}
What happens when we try to understand FK3 varieties in dimension higher than four? It is quite likely that, at least with our level of technological advancement, any dream of classification remains hopeless (without restricting to very specific subcases). What is interesting however, is to produce new and geometrically interesting examples, especially if one is interested in the more \emph{hyperk\"ahler side} of life\footnote{We also remark that the rationality question becomes unrelated to the presence of a K3 structure whenever the dimension is higher than four.}. What becomes interesting then, is to understand \emph{a priori} how to produce such higher-dimensional FK3 examples, without embarking in a rather hopeless \emph{tour de force} in brute force. Luckily, we can pull something out of our bag of tricks. This is the content of \cite{eg1}, \cite{eg2} and \cite{bfm}.

\subsection{How to produce systematic FK3: reversing the Cayley trick} First of all, there is a systematic way to produce new FK3 varieties out of (some) of the old ones. This is the content of the well-known \emph{Cayley trick}, see e.g. \cite[Thm. 2.4]{kkll}. In its simplest version, it is nothing but a blow up lemma. Suppose in fact that we start with a smooth variety $X$ and a codimension 2 linear section $Z$. It is straightforward to check that the blow up $Y=\Bl_Z X$ can be described as the zero locus $Y=(X \times \PP^1, \of(1,1))$: in fact, if $Z$ is defined by the vanishing of $f,g$, one can simply take $y_0 f+y_1g$ as $(1,1)$-section. In particular, thanks to the well-known blow up Lemma for both Hodge structures and derived categories, if either $Z$ or $X$ has a K3 structure to begin with, then $Y$ will have such a structure as well.

More generally, suppose that we can write $Y$ as $Y= (\PP_X(E^{\vee}), \of_P(1))$, with $E$ of rank $r+1$ . Then the fibers of the projection $Y \to X$ will be generically $\PP^{r-1}$, with special fibers $\PP^{r}$ over the degeneracy locus $Z=(X, E)$. Once again, a variant of the blow up lemma ensures that the Hodge (and derived) structure of $Y$ can be read off from those of $X,Z$.

Suppose now that we can explicitly describe $Z$ (either K3 or FK3) as a $Z=(X, E)$. Starting from $Z$ we can \emph{reverse} the Cayley trick and associate to $Z$ another higher dimensional $Y$ defined as above with K3 structure, which sometimes happens to be Fano (hence, FK3), and sometimes admits a simple description. As an example, consider $Z$ a K3 surface of degree 8 in $\PP^5$, i.e. given by the complete intersection of three quadrics. In this case $E= \of(2)^{\oplus 3}$. Hence, to $Z$ we will associate an $Y$ given as $Y=(\PP^5 \times \PP^2, \of(2,1))$. This sixfold will be a FK3, with the K3 structure induced from the octic K3. Using this type of trick one can explain almost all\footnote{Possibly with the exception of M3--M5, which we explained using the formalism of Homological Projective Duality.} the \emph{systematic} FK3 in \cite{eg2}.

FK3 varieties produced in this way may seem ``boring'' from a Hodge-theoretical/derived categorical point of view, but can be interesting from a hyperk\"ahler point of view: in fact a few of explicit families of projective hyperk\"ahler described in \cite[Thm.1]{im19}, \cite[Prop.3.11]{eg2}, \cite[3.1.2, 3.1.3]{ben} can be constructed from FK3 obtained \emph{reversing the Cayley trick}. However, these families of hyperk\"ahler are specializations of well-known examples (respectively, the Hilbert square of a determinantal quartic K3, the variety of lines on a Pfaffian cubic fourfold, and the Hilbert square of  general K3 surfaces of degree 8 and 12). If one is to find (new) locally complete family of examples, we want to look for FK3 with genuinely new K3 structure.

\subsection{A Hodge-theoretical criterion}
The main difficulty here is, of course, being able to predict the Hodge structure of $X=(G, \mF)$ (or even only its level) only in function of the basic invariants of $G$ and $\mF$, such as the dimension of $G$, the degree and rank of $\mF$ and so on. In \cite{eg2} we tried to come up with such a criterion, starting from the analysis of the Griffiths' ring for complete intersections in Grassmannian from \cite{eg1}.  However imperfect\footnote{This method has to be interpreted as an indication of where a K3 structure might hide. In fact, as far as we know, one has to double check everytime that the suspected $X$ is in fact of K3 type.}, it turned out to be quite powerful in spotting previously unnoticed examples of FK3. The idea goes as follows:

\begin{method}\label{num} Let $Y$ be a smooth projective Fano variety of dimension $2t+1$ and index $\iota_Y$. 
Assume that $t$ divides $\iota_Y $ and that $\lambda(H^{2t+1}(Y))\leq 1$. Then a generic $ X \in | -\frac{1}{t} K_Y |$ is a Fano variety of K3 type, with the K3 type structure located in degree $2t$.
\end{method}

 The condition in \ref{num} is not necessary.  In fact notable exceptions are already present in \cite{eg1}, see e.g. Ex. S6, where the decomposition in irreducibles of the bundle that cuts the variety has no linear factor (albeit the variety has the correct ratio between dimension and index), and moreover two K3 sub-Hodge structures are present, in degree 6 and 8.

As it is stated, the method \ref{num} is also not an exact result. To turn it into a Theorem one would need to impose specific cohomological vanishing in each case. Therefore what could be gained in accuracy would be lost in terms of general applicability. It is important however, to notice that no counterexample to the condition in \ref{num} has been found yet.

What is good about the above method is that, whenever $G$ is a Grassmannian or another homogeneous variety, and $\mF$ is homogeneous, we can easily check the numerical relations needed. In fact, in this setting, there are few possible $Y$ to check in any dimension, giving us the possibility to hunt for meaningful examples.

\subsubsection{No other FK3 as complete intersections}
Of course the first attempt is to look for FK3 as complete intersections, in either projective space\footnote{The \emph{weighted} case was studied in \cite{ps}. Even in this case, no smooth Fano complete intersections were found.} or Grassmannians. Other than the known cases - the cubic and Gushel'-Mukai fourfolds and the Debarre-Voisin twentyfold, it was not a surprise to notice that no other complete intersection in $\PP^n$ or $\Gr(k,n)$ satisfy the condition in \ref{num}. On the other hand, there is a complete intersection in a Grassmannian which is of K3 type and lie outside of the range of \ref{num}. In fact, take $X=(\Gr(2,8), \of(1)^{4})$. This is a 8-fold of index 4, with a K3 structure in its middle cohomology, Picard rank 1 and a rank 19 vanishing subspace. A first explanation of the numerology can be found by simply realizing that the (classical) projective dual of the Grassmannian $\Gr(2,8)$ is a certain singular quartic in $\PP(\W^2 V_8^{\vee})$. If we intersect this quartic with the $\PP^3$ spanned by the four dimensional space of skew-symmetric two forms, we obtain a smooth (Pfaffian, hence in this case general) quartic in $\PP^3$, i.e. a K3 surface. This observation implies, thanks to the work of Segal and Thomas, \cite[Thm.2.9]{st}, that the derived category of the quartic K3 embeds into the one of the Fano 8-fold\footnote{Notice how this would follow from a general theory of homological projective duality for all $\Gr(2,n)$.}.

We believe that this (very peculiar) example could be the only exception: namely, in \cite{eg2} we formulated the following conjecture, which has not been proven yet.
\begin{conj} Let $X=X_{d_1, \ldots, d_c} \subset \Gr(k,n)$ be a Fano smooth complete intersection. Then $X$ is not of K3 type unless $$(\lbrace d_i \rbrace, k,n)=(\lbrace 3 \rbrace,1,6),(\lbrace 2,1\rbrace,2,5), (\lbrace 1,1,1,1\rbrace,2,8), (\lbrace 1 \rbrace, 3,10).$$ 
\end{conj}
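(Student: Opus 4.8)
The plan is to reduce the K3-type condition to two explicit numerical constraints on $(k,n,\{d_i\})$ and then to show that, together with the Fano inequality, these admit only the four listed solutions. First I would set $m=\dim X=k(n-k)-c$ and invoke the Lefschetz hyperplane theorem: since each $\of(d_i)$ is ample and globally generated, iterated Lefschetz gives $H^i(X,\C)\cong H^i(\Gr(k,n),\C)$ for $i\neq m$. As $H^\ast(\Gr(k,n))$ is of Hodge--Tate type (spanned by Schubert classes, all of bidegree $(p,p)$), no cohomology outside degree $m$ can carry a K3 structure. Hence the only candidate is the vanishing cohomology $H^m_{\van}(X)$; in particular $m$ must be even, and I write $m=2s$. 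The monodromy acts irreducibly on $H^m_{\van}(X,\Q)$ (as for projective complete intersections), so for very general $X$ this Hodge structure is simple, and it contains a sub-structure of K3 type if and only if it is itself of K3 type. The conditions to impose are therefore $h^{s+1,s-1}_{\van}(X)=1$ together with $h^{s+j,s-j}_{\van}(X)=0$ for all $j\geq 2$, i.e.\ level exactly $2$ with a one-dimensional extremal piece.

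The Fano hypothesis supplies the basic bounds. Since $K_{\Gr(k,n)}=\of(-n)$, adjunction gives $K_X=\of(\textstyle\sum_i d_i-n)|_X$, so $X$ is Fano exactly when $\sum_i d_i\leq n-1$; in particular $H^{2s,0}_{\van}(X)\subseteq H^0(X,K_X)=0$, so the level is automatically $\leq 2s-2$. The heart of the matter is to convert the two Hodge-theoretic conditions into closed numerical statements, and for this I would use the Griffiths-type residue calculus for complete intersections in Grassmannians of \cite{eg1}: the graded pieces of $H^{2s}_{\van}(X)$ are identified with graded components $R_e$ of an explicit Jacobian-type ring attached to the defining section, whose Hilbert function is computable --- through the Borel--Weil--Bott theorem and the Koszul and conormal spectral sequences recalled in the introduction --- purely from $k$, $n$ and $\{d_i\}$. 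When $k=1$ this recovers the classical $H^{m-q,q}_{\van}\cong R_{(q+1)d-n-1}$; note that the level-$2$ condition alone is satisfied by further projective complete intersections (for instance $X_3\subset\PP^7$), and it is precisely the rigidity of $h^{s+1,s-1}_{\van}=1$ that eliminates them and leaves only the cubic fourfold $(\{3\},1,6)$.

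The finiteness step is where I expect the real work to lie, and it is genuinely subtle: the Debarre--Voisin case $(\{1\},3,10)$ shows that even a single hyperplane section of a $21$-dimensional Grassmannian can be of level $2$, so one \emph{cannot} argue that the level grows with $\dim\Gr(k,n)$. Instead I would extract the leading asymptotics of the relevant graded pieces $R_e$ directly from the residue description and show that the simultaneous requirements ``level $=2$'' and ``$\dim R_e=1$'' force $(k,n,\{d_i\})$ into an explicit finite range; here the binding constraint should be $h^{s+1,s-1}_{\van}=1$, since demanding a one-dimensional extremal Hodge piece of exactly the right twist is extremely rigid. Within the resulting finite list the two conditions can then be verified by direct Borel--Weil--Bott computation (e.g.\ in Macaulay2, as in the references), leaving precisely the four cases; throughout I would exploit $\Gr(k,n)\cong\Gr(n-k,n)$ to halve the casework.

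The main obstacle is twofold. First, the Jacobian-type ring of a Grassmannian is \emph{not} a complete-intersection ring --- it is entangled with the Pl\"ucker relations and with the failure of Bott vanishing in the relevant range --- so the clean degree-shift formula of Griffiths must be replaced by a spectral-sequence computation whose Hilbert function I do not expect to control in uniform generality without substantial effort; this is exactly the gap between the heuristic Method~\ref{num} and a theorem. Second, the case $(\{1,1,1,1\},2,8)$ lies \emph{outside} the numerical regime of Method~\ref{num} (four hyperplane sections of $\Gr(2,8)$ have the wrong divisibility between dimension and index), so it is invisible to the generic numerology and must be produced and controlled separately, via the projective-duality mechanism of Segal--Thomas \cite{st} that embeds the derived category of a Pfaffian quartic K3 surface into that of the eightfold. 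A complete proof must show that no further such sporadic linear-section example arises --- precisely the part of the classification that no purely numerical criterion can reach.
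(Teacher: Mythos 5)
You should first be aware that the statement you set out to prove is not a theorem of the paper: it is stated there as a conjecture, with the explicit remark that it ``has not been proven yet.'' So there is no proof in the paper to compare yours against, and the relevant question is whether your proposal closes the gap. It does not, and the gap is the one you yourself flag. Your reductions are sound and match the paper's own framework: Lefschetz together with the Hodge--Tate nature of $H^*(\Gr(k,n),\C)$ confines any K3 structure to the middle vanishing cohomology; adjunction gives the Fano bound $\sum_i d_i\leq n-1$; the graded pieces of $H^{2s}_{\van}(X)$ are in principle computable from the Griffiths-type ring of \cite{eg1}; and your test example is correct ($X_3\subset\PP^7$ has level $2$ but $h^{4,2}_{\van}=8$, so it is killed by the condition $h^{s+1,s-1}_{\van}=1$). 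But all of this merely restates the conjecture in the language of \cite{eg1}. The decisive step --- showing that ``level $=2$'' together with ``$h^{s+1,s-1}_{\van}=1$'' forces $(k,n,\{d_i\})$ into a finite, explicitly checkable range --- is announced (``I would extract the leading asymptotics \dots\ and show \dots'') rather than carried out, and no one currently knows how to do it: as you say, the relevant ring is not a complete-intersection ring, Borel--Weil--Bott enters through spectral sequences whose behavior is not controlled uniformly in $(k,n,\{d_i\})$, and the Debarre--Voisin case $(\{1\},3,10)$ blocks any argument based on growth of the ambient dimension. This missing step is the entire content of the conjecture; it is exactly the distance between the paper's heuristic Method~\ref{num} and a theorem.

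Two further points. First, the case $(\{1,1,1,1\},2,8)$: you correctly note that it escapes the numerology of Method~\ref{num} and is instead explained by Segal--Thomas projective duality \cite{st}; but that mechanism certifies an example, it gives no tool for proving that no \emph{other} sporadic linear-section example exists, which is a part of the exclusion your program cannot reach even in outline. Second, your monodromy argument makes the middle Hodge structure simple only for \emph{very general} $X$, whereas the conjecture quantifies over all smooth members; this is harmless if one reads ``of K3 type'' through Hodge numbers alone (they are deformation invariant, and this is how the paper reads its own definition), but under the literal sub-Hodge-structure definition a special member could a priori acquire a K3 sub-structure that your argument does not see. In sum: your proposal is a correct and well-informed research program --- essentially the one the paper itself suggests --- but it is not a proof, and the conjecture remains open.
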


\subsection{Sporadic FK3 in Fatighenti-Mongardi}
Our search of higher-dimensional FK3 in \cite{eg2} produced a long list of expected Fano which could be more or less explained by \emph{reversing the Cayley trick} starting from generic K3 in their Mukai model\footnote{Hence the name ``Mukai-type'' in the paper.}. More interestingly, we were able to produce a bunch of \emph{sporadic} examples, which deserved a case-by-case analysis. We list them here, and briefly recap some of their main features\footnote{We start from S2, since we called S1 the already mentioned codimension 4 linear section of $\Gr(2,8)$.}. They all have Picard rank $\rho=1$, with the only exception of S2.
\begin{enumerate}
\item S2: $(\OGr(3,8), \of(1))$. A Fano 8-fold of index $\iota=3$, with 1 K3 structure in weight 8.
\item S3: $(\SGr(3,9), \of(1))$. A Fano 14-fold of index $\iota=6$, with 1 K3 structure in weight 14\footnote{Independently discovered and studied by Iliev and Manivel in \cite{im19}.}.
\item S4: $(\SGr_2(3,8), \of(1))$. A Fano 8-fold of index $\iota=3$, with 1 K3 structure in weight 8.
\item S5: $(\Gr(2,9), \mQ^{\vee}(1) \oplus \of(1))$. A Fano 6-fold of index $\iota=2$, with 1 K3 structure in weight 6.
\item S6: $(\Gr(2,10), \mQ^{\vee}(1))$. A Fano 8-fold of index $\iota=3$, with 3 K3 structure in weights 6,8,10. By Lefschetz, a linear section of S6 is of K3 type as well\footnote{In the original paper, we decided to consider these as two separate examples, giving the latter the name S7. The reason for this was to highlight some important features of S7: in fact not only it has 2 K3 structure, but 1 3CY structure in weight 7 as well. As of today, is the only known prime Fano with this property.}.
\item S7: $(\Gr(k,10), \W^3 \mU^{\vee})$ for $k=4,5$. They\footnote{Contrary to the previous example, we grouped these Fano together since they share more similarities than differences.} are (respectively) a Fano 20-fold (15-fold) of index $\iota=7$ ($\iota=4$) with 7 (8) K3 structures in weights from 14 to 26 (from 8 to 22). By Lefschetz, 5 (3) linear sections of these varieties are still FK3.
\end{enumerate}

This list looks very diverse at a first sight, at least to the uneducated eye\footnote{As was ours when we first wrote about it.}. In fact, these Fano a priori share more differences than similarities. To begin with, they come in different dimensions, indexes, degrees and all that.  Moreover, they appear to be quite different even from a Hodge-theoretical perspective: in fact, as explained in the list above, in some case they come with one K3 structure, in some case with multiple ones, in different weights. A case-by-case analysis was in fact needed. In what follows, we will recap some of the most relevant features.

\subsubsection*{Fano S2} This Fano is, in some sense, the simplest of the lot. Notice first that $\OGr(3,8)$ itself can be described as the zero locus of a homogeneous vector bundle on $\Gr(3,8)$, this being obviously $\Sym^2 \mU^{\vee}$. In other words, one can describe the Fano S2 as the locus of three-dimensional subspaces $U_3 \subset V_8$ that are isotropic for a maximal rank quadratic form and for a given skew-symmetric 3-form.
The key in understanding this example lies, in fact, in the geometry of $\OGr(3,8)$ itself. In fact the latter can be considered as the projectivization $\PP_{\OGr^+}(\mU)$, where $\OGr^+:= \OGr^+(4,8)$ is one of the two connected component of the maximal orthogonal Grassmannian in its spinor embedding\footnote{It is worth noting that $ \OGr^+(4,8)$ is nothing but a 6-dimensional quadric.}. This also implies that the Picard rank of $\OGr(3,8)$ is 2, and the same by Lefschetz hyperplane theorem holds for S2, which is nothing but a linear section of this Grassmannian\footnote{In fact, is the only FK3 of the \emph{sporadic} group with Picard rank $\rho>1$.}. Also, the restriction of the Pl\"ucker line bundle from $\Gr(3,8)$ is the line bundle $\of(1,1)$, with respect to the basis of the Picard group given by the pullbacks of the hyperplane classes from the two spinor varieties.
By the Borel-Bott-Weil theorem, one has $H^0(\OGr(3,8), \of(1,1)) \cong H^0(\OGr^+(4,8), \W^3 \mU^{\vee})$. We can easily verify that the zero locus $W=(\OGr^+(4,8), \W^3 \mU^{\vee})$ is a (generic) K3 surface\footnote{One can also consider a second K3 surface of degree 12, namely in $\OGr^-(4,8)$. These two K3 surfaces where shown to be $\mathbb{L}$ equivalent but not isomorphic in \cite{ito}.} of degree 12 and genus 7. Equivalently, the restriction to S2 of the projection from $\OGr(3,8)$ to $\OGr^+(4,8)$ is generically a $\PP^2$ bundle, degenerating to a $\PP^3$ bundle exactly over $W$. These results imply that the Hodge structure of S2 (and in fact its derived category) comes from an actual K3 surface $W$\footnote{In fact, the vanishing subspace of $H^{4,4}$ has rank 19.}, which is as well the hyperk\"ahler associated to it.

\subsubsection*{Fano S3 and S4}
We collect here these two (apparently different) Fano, for reasons which will be clear in the next section. As in the previous case, we remark that the symplectic Grassmannian $\SGr(3,9)$ can be described as the zero locus of the homogeneous vector bundle $\W^2 \mU^{\vee}$ in $\Gr(3,9)$. Hence, the Fano S3 can be thought of the locus of three dimensional subspaces $U_3 \subset V_9$ which are isotropic for a maximal rank skew-symmetric 2-form\footnote{Since 9 is odd, maximal rank implies that the kernel of this two form is one dimensional.} and for a given skew-symmetric 3-form. Similarly, the bisymplectic Grassmannian $\SGr_2(3,8)$ can be described as the zero locus of the homogeneous vector bundle $(\W^2 \mU^{\vee})^{\oplus 2}$ in $\Gr(3,8)$, and the Fano S4 as the locus of three dimensional subspaces $U_3 \subset V_8$ which are isotropic for a pair of maximal rank skew-symmetric 2-form and for a given skew-symmetric 3-form. Surprisingly\footnote{Not really, a posteriori.}, they can be both linked to the Debarre-Voisin hyperk\"ahler fourfold $Z$. In fact in \cite[Prop.6]{im19} and \cite[Thm 3.26]{eg2} we were able to reconstruct this $Z$ as the zero locus of certain vector bundles on (birational modifications) of $\Gr(6,9)$ and $\Gr(6,8)$, and a variety parametrizing copies of $\SGr(3,6)$ and $(\PP^1)^3 \cong \SGr_2(3,6)$, see \cite{kuzpicard}, contained in S3 and S4.

\subsubsection*{Fano S5-S6-S7}
To describe S6 and S7, we start from a tri-vector $\sigma \in \W^3 V_{10}^{\vee}$. A Fano variety of type S6\footnote{Which is classically called \emph{congruence of lines}, see e.g. \cite{faenzi}.} will be then described as the locus of two dimensional subspaces $U_2 \subset V_{10}$ that annihilates this given $\sigma$. One can similarly define a Fano of type S7, from appropriately $k$-dimensional subspaces. In order to define S5 one starts instead from a tri-vector and a skew-symmetric two-vector over a 9-dimensional vector space: as we are going to see very soon, S5 can therefore be considered as similar to both S6/S7, and S3/S4. From S6 as well\footnote{And with little doubt to S5 and S7, with enough time and effort.} we were able to reconstruct the Debarre-Voisin hyperk\"ahler fourfold $Z$, as the space parametrizing certain special rational fourfold obtained as codimension 4 linear sections of $\Gr(2,6)$, see \cite[Prop.3.30]{eg2}.

\subsubsection{Peskine variety and sections of $E_6$}
One may wonder if considering rational homogeneous varieties from exceptional Lie groups one gets more interesting examples of FK3 varieties. However, this does not seem the case. There is a well-known case, where one takes as ambient variety the \emph{Cayley plane} $E_6/P_1$, and then take 6 linear sections: one gets a 10-dimensional Fano, of index 6, which is in fact a FK3. However, this can be explained using once again projective duality: in fact, as explained for example in \cite{im14}, the projective dual of the Cayley plane is the Cartan cubic: taking orthogonal linear sections one cuts the Cartan cubic in a smooth cubic fourfold. Hence, the K3 structure in this exceptional case is once again explained by a known one. This statement is conjecturally categorified in \cite[Conj. 1.2]{bks}.

Similarly, it seems that there should not be many more examples in low dimensions waiting to be discovered, even considering completely reducible bundles in arbitrary type.\footnote{We thank Pieter Belmans for this computation!}

The situation changes a lot if one considers - for example - degeneracy loci, which are basically uncharted territories. Already (briefly) in \cite{eg2} and much more in details in \cite{bfm} we considered the \emph{Peskine variety} $P$. To describe it, start (once again) with a tri-vector $\sigma \in \W^3 V_{10}^{\vee}$. For a $V_1 \subset V_{10}$ considered the skew-symmetric 2-form $\sigma(V_1, -, -)$ on $V_{10}/V_{1}$: this is generically of rank 8, but one can consider its Pfaffian locus, i.e. the locus where this form has rank 6 or less.\footnote{Equivalently, the zero locus of the 9 submaximal Pfaffians.} This is a prime Fano sixfold of index 3. In \cite{bfm}, we computed its Hodge structure, and checked that it has in fact 3 K3 structures, the first one being in weight 4\footnote{Notice how for a 6-fold, 3 is in fact the maximum number of possible K3 structure. In this sense, the Peskine variety is the ``best'' possible example of FK3.}. Once again, this Fano is linked to the Debarre-Voisin hyperk\"ahler fourfold\footnote{This tri-vector $\sigma \in \W^3 V_{10}^{\vee}$ is starting to look really suspect...}, which can be obtained as the space parametrizing the Palatini threefolds in $P$, see \cite{han}.
Also, as already remarked in the introduction, the integral Hodge conjecture holds for $P$, see \cite[Thm.1.2]{bs}.

\subsection{Jumps and projections: all these K3 structures are the same}
The ubiquity of the Debarre-Voisin hyperk\"ahler fourfold in this picture made us wonder if all these apparently different FK3 could somehow be related. In order to understand this, in \cite{bfm} we started from a very simple algebraic observation: if we take a skew symmetric tri-vector $\sigma \in \W^3 V_n$, and we choose a $v_0 \in V_n$ and therefore a decomposition $V_{n} \cong U \oplus v_0$, this induces a decomposition $\W^3 V_n \cong \W^3 U \oplus v_0 \wedge \W^2 U$. Projecting from $v_0$ we induce a rational map $\Gr(k, V_n) \dashrightarrow \Gr(k, U)$, which can be resolved blowing up the indeterminacy locus, which is isomorphic to $\Gr(k-1, U)$. 

If we now set $k=3$ and consider a skew tri-vector $\sigma$ as above, the zero locus $W$ of $\sigma$ is in fact a hyperplane section of $\Gr(3,V_n)$, i.e. $W=(\Gr(3,V_n), \of(1))$. Moreover, the above decomposition now reads $\sigma= \sigma' + v_0^* \wedge \omega$, with $\sigma \in \W^3 U^{\vee}$ and $\omega \in \W^2 U^{\vee}$. We can restrict the projection to $W$, and we still have a rational surjective map to $\Gr(3,U)$, which can be resolved by blowing up the zero locus of $\omega$, i.e. $\SGr(2, U)$. Hence, we end up with a map from $W$ to $\Gr(3, U)$ which (after blowing up), is a generically a $\PP^2$ bundle, with special fibers the whole of $\PP^3$ over the zero locus $X$ of $\sigma'$ and $\omega$, i.e. $X=(\Gr(3,U), \W^2 \mU^{\vee} \oplus \of(1))$, or equivalently $X=(\SGr(3,U), \of(1))$. We call this procedure the \textbf{projection} between $W$ and $X$\footnote{We can try to reiterate the procedure, jumping from $X$ to $(\SGr_2(3, U/v_1), \of(1))$, and even further. However the relative position of the projecting vector and the t-uples of 2-forms will make the degeneracy locus singular as soon as we do three jumps starting from an even-dimensional $V_n$, and two jumps from an odd-dimensional $V_n$.}. Thanks to \cite[Prop. 48]{bfm}, we can produce an isomorphism of integral Hodge structures in any weight between the Hodge structure of $\Bl_{\SGr(2,U)} W$ and the direct sum of the Hodge structures of $X$ and of $\SGr(3,U)$, counted with multiplicity. Of course we can use the classical blow up formula to relate this to the Hodge structure of $W$ as well.

There is a natural geometric construction that we can produce starting from $W$, for any $k$. Once again, start from $W=(\Gr(k, V_n), \of(1))$, given by the zero locus of a certain $\sigma$. Consider the projection $\pi: \Fl(k-1, k, V_n) \to \Gr(k,V_n)$, which is a $\PP^{k-1}$ bundle. We can now consider the zero locus of $\pi^*\sigma $ in  $\Fl(k-1, k, V_n)$, and the restriction of the projection $\overline{\pi}: \pi^*W \to W$, which is still a $\PP^{k-1}$-bundle.
On the other hand $\Fl(k-1, k, V_n)$ comes naturally equipped with another projection to $\Gr(k-1, V_n)$, which we can call $\varphi$, which is a $\PP^{n-k}$-bundle. In fact, we can think of $\Fl(k-1, k, V_n)$ as $\PP_{\Gr(k-1, V_n)}(\mQ(-1))$. If we restrict $\overline{\varphi}$ to $ \pi^*W $, we get a map towards $\Gr(k-1, V_n)$ which is generically surjective with fibers $\PP^{n-k-1}$, with special fibers $\PP^{n-k}$ over the zero locus $T=(\Gr(k-1, V_n), \mQ^{\vee}(1))$\footnote{Notice that $H^0(\Gr(k-1,n), \mQ^{\vee}(1)) \cong H^0(\Gr(k,n), \of(1)) \cong \W^k V_{n}^{\vee}$.}. We call this construction the \textbf{jump} from $W$ to $T$. Once again, we can use \cite[Prop. 48]{bfm} to understand the Hodge structure of $T$ in terms of these of $W$ and $\Gr(k-1, V_n)$\footnote{In \cite[Prop.49]{bfm} we developed a derived categorical version of Prop.48, which allows us to describe to some extent what happens to the derived category of $W$ after jumping and projecting. This leads us to a series of conjectures and reasonable expectations, see e.g. Conj.28, which are still not solved to this day, given the huge number of mutations to be performed.}.

Now take $k=3, n=10$. $W$ is the 20-fold Fano hypersurface originally considered by Debarre and Voisin. If we perform the first \emph{projection} we end up in $X=(\SGr(3,9), \of(1))$, i.e. \textbf{Fano S3} of the previous section. In this specific case, we can project a second time without hitting any singularity, and we end up in $Y=(\SGr_2(3,8), \of(1))$, i.e. \textbf{Fano S4} of the previous section\footnote{Notice that if we could project one time further, we would end up in $(\SGr_3(3, 7), \of(1))$, which is a K3 surface of degree 22 in its Mukai model. However, in order to project we have to degenerate the Fano to a singular one. In some sense, this can be considered a Hodge-theoretical version of the degeneration argument used in \cite{dv}.}. 

On the other hand, if you \emph{jump} from $W$ to $T$ we get $T=(\Gr(2,10), \mQ^{\vee}(1))$, i.e. \textbf{Fano S6} of the previous section. Moreover, projecting from $T$ one gets exactly \textbf{Fano S5}. Starting from $T$, one may also perform a slightly modified version of the jump, using as roof variety the Flag $\Fl(1,2,10)$. We obtain that a $\PP^1$-bundle over $T$ is in fact the blow up $\Bl_P \PP^9$, where $P$ is the \textbf{Peskine} variety previously considered. Finally, we can jump the other way around, and obtain in this way the Fano collected as \textbf{S7} in the previous section. We can visualize some of these maps using the following diagram, where $P$ denotes the Peskine 6-fold and $W$ the Debarre--Voisin 20-fold:
\hspace{-0.7cm}{\begin{equation*}
\xymatrix{
   & E  \ar@{}[dr]|{(2)} \ar@{^{(}->}[r]^{cdim 7} \ar[d]^{\PP^7} & q^*W  \ar[d]^{\PP^6} \ar[dr]^{\PP^2} & &\mathrm{Bl}_{S}W \ar@{}[dr]|{(3)} \ar[dl]_{bu} \ar[d]^{\PP^2} & F_1 \ar@{_{(}->}[l]_{cdim 3} \ar[d]^{\PP^3} & \\
   & T \ar@{^{(}->}[r]& \Gr(2,10) & W & \Gr(3,9) & X \ar@{_{(}->}[l] &  \\
 E' \ar@{}[dr]|{(1)} \ar[d]_{\PP^2} \ar@{^{(}->}[r]^{exc.div.} & q^*T \ar[d]^{bu} \ar[u]_{\PP^1} & & & & \mathrm{Bl}_{S_1} X \ar[u]^{bu} \ar@{}[dr]|{(4)} \ar[d]_{\PP^2} & F_2 \ar@{_{(}->}[l]_{cdim 3} \ar[d]^{\PP^3} \\
P \ar@{^{(}->}[r]_{cdim 3} & \PP^9& & & & \SGr(3,8) & Y,\ar@{_{(}->}[l]  
}
\end{equation*}}

From the above construction one can end up with the following theorem, which links up in a compact way most of the Fano considered in \cite{eg2}.

\begin{thm}{\cite[Thm.21]{bfm}}
The Hodge structure $K \cong H^{20}_{v}(W)$ is the minimal weight $2$ Hodge structure containing $H^{*-1,*+1}$ in the following Hodge structures:
\begin{itemize}
\item $H^{14}(X,\C)$,
\item $H^8(Y,\C)$,
\item $H^j(T,\C)$, for $j=6,8,10$,
\item $H^j(P,\C)$, for $j=4,6,8$.
\end{itemize}
Moreover, $H^{p,q}(\bullet)/ K = 0$ for $p\neq q$ for $\bullet$ either $Y_1$, $Y_2$, $T$ or $P$. 

Finally, if $Y$ is very general, then $K$ coincides with the vanishing cohomologies of all of the above cohomology groups for $Y_1$, $Y_2$, and for $P$ if $j=6$\footnote{Here ``vanishing'' means orthogonal to the sublattice generated by the third power of the hyperplane class and by the class of a Palatini 3-fold, see \cite{bs}.}.
\end{thm}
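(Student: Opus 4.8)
The plan is to transport the vanishing cohomology of the Debarre--Voisin $20$-fold $W$ through the \emph{projection} and \emph{jump} diagrams assembled above, using \cite[Prop.48]{bfm} as the main engine and exploiting the Hodge--Tate nature of the auxiliary homogeneous varieties to isolate the K3-type pieces. First I would fix the tri-vector $\sigma\in\W^3 V_{10}^\vee$ and record the correspondences appearing in the big diagram: the projection from $W$ resolved by $\Bl_{\SGr(2,U)}W$ onto $X=(\SGr(3,9),\of(1))$, the second projection onto $Y=(\SGr_2(3,8),\of(1))$, the jump from $W$ through $\Fl(2,3,10)$ onto $T=(\Gr(2,10),\mQ^\vee(1))$, and the modified jump realizing a $\PP^1$-bundle over $T$ as $\Bl_P\PP^9$. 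In each case the relevant map is, up to a blow-up along a smooth homogeneous centre, a projective bundle over the target whose fibre dimension jumps over the degeneracy locus, so \cite[Prop.48]{bfm} applies and produces an isomorphism of integral Hodge structures, in every weight, between the cohomology of the resolved total space and the direct sum of the cohomology of the target and of the homogeneous variety being projected from or flagged over, counted with the multiplicities dictated by the projective-bundle formula.

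The key point is that every auxiliary factor that occurs---$\Gr(3,10)$, $\SGr(3,9)$, $\SGr_2(3,8)$, $\Gr(2,10)$, the flag varieties and $\PP^9$---has Hodge--Tate cohomology, i.e. $H^{p,q}=0$ for $p\neq q$. Consequently, in each application of \cite[Prop.48]{bfm} the only summands carrying off-diagonal classes $H^{p,q}$ with $q-p=2$ come from the non-homogeneous target, and the blow-up formula contributes only Tate classes along the centre. Thus each correspondence restricts to an isomorphism of the K3-type pieces once they are Tate-twisted to a common weight. Tracking the twists through the diagram matches the single class $H^{9,11}(W)$ with $H^{6,8}(X)$, with $H^{3,5}(Y)$, with the three classes $H^{2,4},H^{3,5},H^{4,6}$ of $T$ and with the three classes $H^{1,3},H^{2,4},H^{3,5}$ of $P$; the occurrence of several K3 pieces in $T$ and $P$ is precisely the multiplicity predicted by the projective-bundle formula, each copy being a Tate twist of one and the same abstract weight-$2$ structure. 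Since after twisting to weight $2$ all these pieces are identified with $K\cong H^{20}_v(W)(9)$, the minimal weight-$2$ Hodge structure containing all of the distinguished top pieces $H^{*-1,*+1}$ is exactly $K$; and the vanishing $H^{p,q}(\bullet)/K=0$ for $p\neq q$ on $X,Y,T,P$ follows because, away from the single K3 summand, the correspondences only ever contribute Hodge--Tate classes.

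For the final, genericity statement I would argue by monodromy together with a Noether--Lefschetz argument. For very general $\sigma$ one expects the Hodge--Tate part of each of these varieties to be spanned exactly by the visible algebraic classes---powers of the Pl\"ucker or hyperplane class, together with the class of a Palatini $3$-fold in the Peskine case---so that the orthogonal complement, the vanishing cohomology, coincides with the image of $K$. The input is the irreducibility of the monodromy representation on $H^{20}_v(W)$ for the full Debarre--Voisin family, which forces this space to contain no nonzero Hodge--Tate class for very general $\sigma$; transporting this conclusion through the Hodge isomorphisms of the previous step gives the same statement for $X$ and $Y$ (the two projected Fano, written $Y_1,Y_2$ in the statement), for $T$, and, in weight $6$, for $P$.

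I expect the main obstacle to be the careful bookkeeping of multiplicities and Tate twists when several correspondences are composed---in particular reaching $Y$, which lies two projections away from $W$, and reaching $P$ through the modified jump and the identification of a $\PP^1$-bundle over $T$ with $\Bl_P\PP^9$---and, at each stage, verifying that the degeneracy locus is smooth of the expected codimension so that \cite[Prop.48]{bfm} genuinely applies; the text already signals that singularities appear after too many jumps, so the range of validity must be checked by hand. The other delicate point is the very-general statement: ruling out sporadic Hodge--Tate classes beyond the expected ones is exactly where the monodromy input is indispensable.
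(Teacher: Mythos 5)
Your proposal follows essentially the same route as the paper's own treatment: the survey establishes this theorem precisely by applying \cite[Prop. 48]{bfm} along the projection $W \dashrightarrow X \dashrightarrow Y$ (resolved by blowing up $\SGr(2,U)$, resp. $\SGr(2,U/v_1)$) and the jumps $W \rightsquigarrow T \rightsquigarrow P$ through the flag bundles, using the Hodge--Tate nature of the homogeneous factors to conclude that all off-diagonal classes are Tate twists of the single structure $K \cong H^{20}_v(W)$, with the very-general statement handled by the standard irreducible-monodromy/Noether--Lefschetz argument you describe. Your bookkeeping of twists, multiplicities, and the identification of $Y_1, Y_2$ with the two projected Fanos matches the paper's construction, so there is nothing substantive to correct.
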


\section{Where is everybody?}
We are leaving the reader asking themselves the obvious questions: are there any other FK3 around? Can we find them? Will we be able to construct more hyperk\"ahler\footnote{The title of this section is an obvious reference to the \emph{Fermi paradox}, i.e. the apparent contradiction between the lack of evidence for extraterrestrial life and various high estimates for their probability. We are not comparing (yet) the quest for new hyperk\"ahler to the SETI program for the search of intelligent extraterrestrial life in the universe, but sometimes this task seems just as hard!}? Luckily, we can explain to the reader which roads we have not explored yet, and what we plan to do now.

First of all, one could always hope to find a new FK3 given as zero locus of a homogeneous, completely reducible vector bundle. After all, we don't have any arguments yet that involve crazily constructed bundles in incredibly high-dimensional Grassmannians. However, Benedetti in his thesis proved that any hyperk\"ahler constructed as the zero locus of an \emph{irreducible}\footnote{Or any fourfold, for a completely reducible, globally generated, homogeneous vector bundle.} homogeneous vector bundle over a (single) ordinary\footnote{This result was recently extended to the exceptional Grassmannian case by \cite{ex} for fourfolds, where the hypothesis of irreducibility is dropped as well.}
 Grassmannian must be either the variety of lines on a cubic fourfold or the Debarre-Voisin example, see \cite[Thm.2.1.20]{ben}. This of course does not rule out the possibility that a new higher-dimensional  locally complete family of hyperk\"ahler (and by analogy a family of prime FK3) can be found using some ad-hoc construction with more complicated vector bundles over a single Grassmannian, but does not make us optimistic either.
 
 However, Benedetti's result does not say anything about constructing hyperk\"ahler from vector bundles in \emph{product} of Grassmannians. In fact, we can find some of these examples in \cite{im19}, \cite{eg2}, \cite{ben}, in different dimensions. As we already discussed in the fourfold case, it is not hard to find examples of FK3 embedded in products, although we expect most of the K3 structures to come from already known ones. In particular, we expect the potential families of hyperk\"ahler to be non-locally complete (and very likely of K3$^{[n]}$ type).
 
 Even more interestingly, there is a direction the surface of which we have barely started scratching, i.e. degeneracy loci of morphisms of vector bundles (in their classical and orbital version, see e.g. \cite{bfmt20}). We have already seen how to construct a (prime!) FK3 as degeneracy locus, i.e. the Peskine variety $P$ of the previous section. Even from the hyperk\"ahler side degeneraci loci pop out quite frequently, see e.g. the famous construction of EPW sextics, \cite{epw}, i.e. special singular hypersurfaces constructed as degeneracy loci, whose double cover is a locally complete family of hyperk\"ahler fourfolds of $K3^{[n]}$ type.
 Very recently Benedetti, Manivel and Tanturri in \cite{bmt} constructed a family\footnote{Not locally complete - in fact it is of codimension one in the moduli space.} of hyperk\"ahler of \emph{generalized Kummer type} starting from a related\footnote{To construct the Peskine variety, one start from a skew tri-vector $\sigma \in \W^3 V_{10}^{\vee}$. To construct the Coble cubic, one start from a skew tri-vector $\sigma \in \W^3 V_{9}^{\vee}$.} degeneracy locus, i.e. the Coble cubic in $\PP^8$. It is not wishful thinking\footnote{Not totally, at least.} to imagine that more interesting examples -- both from FK3 and hyperk\"ahler perspective -- could and will be found as soon as our knowledge in this area expands.
 
  Finally, there is another direction which we are pursuing at the moment and that looks promising. We have already seen in the fourfold case that some K3 structure in FK3 seems to appear from \emph{singular} Fano. It would be interesting to reverse this perspective, and start looking directly for \emph{singular FK3}\footnote{There are several possible ways of generalizing the concept of FK3 to the singular case. However, one must exercise a certain degree of caution at this stage in order to avoid \emph{rookie mistakes}, and therefore we will keep the definition deliberately vague.} varieties. One could then try to understand the space of special subvarieties in these Fano, and see if they happen to be (degeneration of) family of hyperk\"ahler manifolds. However, as one should say in these cases, this is a story for another day.

\frenchspacing


\newcommand{\etalchar}[1]{$^{#1}$}

\end{document}